\DeclareFontFamily{U}{mathx}{\hyphenchar\font45}
\DeclareFontShape{U}{mathx}{m}{n}{
	<5> <6> <7> <8> <9> <10>
	<10.95> <12> <14.4> <17.28> <20.74> <24.88>
	mathx10
}{}
\newtheorem{theorem}{Theorem}[section]
\newtheorem*{theoremA}{Theorem A}
\newtheorem*{theoremB}{Theorem B}
\newtheorem*{theoremC}{Theorem C}
\newtheorem{lemma}[theorem]{Lemma}
\newtheorem{corollary}[theorem]{Corollary}
\newtheorem{fact}[theorem]{Fact}
\newtheorem{claim}[theorem]{Claim}
\theoremstyle{remark}
\newtheorem{remark}[theorem]{Remark}
\theoremstyle{definition}
\numberwithin{equation}{section}
\newcommand{\nn}[1]{{\left\vert\kern-0.25ex\left\vert\kern-0.25ex\left\vert #1 
		\right\vert\kern-0.25ex\right\vert\kern-0.25ex\right\vert}}
\newcommand{\flag}{\mathord{\upharpoonright}}
\renewcommand{\leq}{\leqslant}
\renewcommand{\geq}{\geqslant}
\newcounter{smallromans}
\newenvironment{romanenumerate}
{\begin{list}{{\normalfont\textrm{(\roman{smallromans})}}}%
		{\usecounter{smallromans}\setlength{\itemindent}{0cm}%
			\setlength{\leftmargin}{5.5ex}\setlength{\labelwidth}{5.5ex}%
			\setlength{\topsep}{.5ex}\setlength{\partopsep}{.5ex}%
			\setlength{\itemsep}{0.1ex}}}%
	{\end{list}}
\newcommand{\R}{\mathbb{R}}
\newcommand{\N}{\mathbb{N}}
\newcommand{\K}{\mathbb{K}}
\newcommand{\e}{\varepsilon}
\newcommand{\p}{\varphi}
\newcommand{\n}{\left\Vert\cdot\right\Vert}
\DeclareMathOperator{\Span}{span}
\DeclareMathOperator{\supp}{supp}
\newcounter{smallromansdash}
\newcounter{bigromans} 
	{\end{list}}
\begin{document}
	\title[Smooth norms in dense subspaces of Banach spaces]{Smooth norms in dense subspaces of Banach spaces}
	
	\author[S.~Dantas]{Sheldon Dantas}
	\address[S.~Dantas]{Department of Mathematics\\Faculty of Electrical Engineering\\
		Czech Technical University in Prague\\Technick\'a 2, 166 27 Praha 6\\ Czech Republic}
	\email{gildashe@fel.cvut.cz}
	
	\author[P.~H\'ajek]{Petr H\'ajek}
	\address[P.~H\'ajek]{Department of Mathematics\\Faculty of Electrical Engineering\\Czech Technical University in Prague\\Technick\'a 2, 166 27 Praha 6\\ Czech Republic}
	\email{hajek@math.cas.cz}
	
	\author[T.~Russo]{Tommaso Russo}
	\address[T.~Russo]{Department of Mathematics\\Faculty of Electrical Engineering\\
		Czech Technical University in Prague\\Technick\'a 2, 166 27 Praha 6\\ Czech Republic}
	\email{russotom@fel.cvut.cz}
	
	\thanks{Research of the first and second-named authors was supported by OPVVV CAAS CZ.02.1.01/0.0/0.0/16$\_$019/000077. Research of the third-named author was supported by the project International Mobility of Researchers in CTU CZ.02.2.69/0.0/0.0/16$\_$027/0008465 and by Gruppo Nazionale per l'Analisi Matematica, la Probabilit\`{a} e le loro Applicazioni (GNAMPA) of Istituto Nazionale di Alta Matematica (INdAM), Italy}
	
\keywords{Smooth renorming, dense subspace, analytic norm, unconditional basis, Partington's theorem}
\subjclass[2010]{46B03, 46B20 (primary), and 46G25, 46T20 (secondary)}
\date{\today}
	
\begin{abstract} In the first part of our paper, we show that $\ell_\infty$ has a dense linear subspace which admits an equivalent real analytic norm.
As a corollary, every separable Banach space, as well as  $\ell_1(\mathfrak{c})$, also has a dense linear subspace which admits an analytic renorming. By contrast, no dense subspace of $c_0(\omega_1)$ admits an analytic norm. In the second part, we prove (solving in particular an open problem of Guirao, Montesinos, and Zizler in \cite{GMZ}) that every Banach space with a  long unconditional Schauder basis contains a dense subspace that admits a $C^{\infty}$-smooth norm. Finally, we prove that there is a proper dense subspace of $\ell_{\infty}^{c}(\omega_1)$ that admits no G\^ateaux smooth norm. (Here, $\ell_{\infty}^{c} (\omega_1)$ denotes the Banach space of real-valued, bounded, and countably supported functions on $\omega_1$.)
\end{abstract}
\maketitle

	\section{Introduction}

It is well-known that the existence of an equivalent smooth norm has profound structural consequences for a Banach space $X$. For example, if $X$ has a $C^1$-smooth
renorming (or just a bump function) then it is an Asplund space, so the dual of every separable subspace of $X$ is also separable \cite{F}. If $X$ has a $C^2$-smooth renorming then $X$ either
contains a copy of $c_0$ or it is superreflexive \cite{FWZ}. Finally, if $X$ has a $C^\infty$-smooth renorming then it contains a copy of $c_0$ or $\ell_p$, where $p$ is an even integer \cite{D}. 

The proofs of these results, as well as many other results concerning the best smoothness of concrete Banach spaces, depend at some point on the completeness of the spaces in question.
Nevertheless, it is quite surprising that for separable spaces the completeness condition is in some sense also necessary. More precisely, Vanderwerff \cite{V} proved that every normed space with a countable algebraic basis admits a $C^{1}$-smooth renorming. This result was pushed further to get a $C^{\infty}$-smooth renorming \cite{H}. A slight reformulation of these results can be stated in the following form. Given any separable Banach space $X$, there exists a dense linear subspace of $X$ admitting an equivalent $C^{\infty}$-smooth norm. This leads naturally to the following, at a first glance rather bold, general question. 
\begin{center} 
{\it Given a (non-separable) Banach space $X$, is there a dense linear subspace admitting a $C^{k}$-smooth norm, where $k\in\N\cup\{\infty, \omega\}$?}
\end{center}
In the special case of $X=\ell_1(\Gamma)$ and $k=1$,  this problem was posed in the recent monograph by A. Guirao, V. Montesinos, and V. Zizler (see \cite[Problem 149]{GMZ}),
which was the starting point of our research. We obtain some partial positive results to this general problem, which remains open for $k=\infty$ (in the case $k=\omega$, we give a counterexample).
In the first part, we improve Vanderwerff's results as follows. (The proofs of the results presented in the Introduction are postponed to subsequent sections, where the necessary terminology is also explained; in particular, we refer to Section \ref{Sec: Prel} for the necessary background.)
	
\begin{theoremA}\begin{romanenumerate}
    \item $\ell_{\infty}$  admits a dense subspace with an analytic renorming;
	\item Every separable Banach space admits a dense subspace with an analytic renorming;
	\item $\ell_1(\mathfrak{c})$ admits a dense subspace with an analytic renorming.
	\end{romanenumerate}
\end{theoremA}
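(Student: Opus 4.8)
My plan is to treat (i) as the core result and to obtain (ii) and (iii) as consequences of the \emph{same} construction. The unifying remark is that $\ell_{\infty}$ carries the countable norming family $\{\delta_n\}_{n\in\N}$ of coordinate evaluations, so that $\|x\|_{\infty}=\sup_n|\delta_n(x)|$; a Banach space admits an isometric copy inside $\ell_{\infty}$ exactly when it possesses such a countable norming subset of its dual ball. For a separable $X$ this is classical (take a weak$^*$-dense sequence in $B_{X^*}$), while for $\ell_1(\mathfrak c)$ it is Partington's theorem, the required norming sequence being furnished by a countable family of $\pm1$-patterns on $\mathfrak c\cong\{0,1\}^{\N}$ separating the points. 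Now an analytic norm restricts to an analytic norm on any linear subspace, and the restriction stays equivalent to the inherited norm; hence, once (i) produces a dense $Y\subseteq\ell_{\infty}$ with an equivalent analytic norm $N$, it suffices to arrange that $Y$ contains the isometric image of a fixed countable-dimensional (respectively, $c_{00}(\mathfrak c)$) dense subspace of the embedded copy of $X$ (respectively, of $\ell_1(\mathfrak c)$). Restricting $N$ then yields (ii) and (iii). I therefore concentrate on (i), building the spanning system of $Y$ so that finitely, or $\mathfrak c$-many, prescribed vectors can be absorbed into it.

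For (i) the task is to manufacture, on a carefully chosen dense linear subspace $Y$, an equivalent norm that is \emph{real analytic} off the origin and comparable to $\|\cdot\|_{\infty}$, i.e. whose unit ball is sandwiched between two multiples of $B_{\ell_{\infty}}\cap Y$. The natural idea is to smooth $\sup_n|\delta_n(\cdot)|$ by an analytic ``soft maximum'': a convergent series in the variables $\delta_n(x)$ built from a fixed entire even convex profile. The subspace $Y$ and the weights must be designed simultaneously so that (a) $\Span$ of the system is dense, (b) the series converges and defines an analytic convex gauge at every point of $Y$, and (c) the gauge is trapped between fixed multiples of the sup norm. The available degrees of freedom are the choice of $Y$ (which I take incomplete, generated by a long biorthogonal system with bounded coordinate functionals together with finitely-valued building blocks, which are already dense in $\ell_{\infty}$) and the profile and summable weights governing the soft maximum.

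The main obstacle is genuinely categorical: \emph{there are no analytic partitions of unity}. The $C^{\infty}$ results of Vanderwerff and of \cite{H} proceed by smoothing finite-dimensional norms and gluing locally finite pieces, a device simply unavailable in the analytic category. Consequently $N$ must be presented as a \emph{single globally convergent analytic expression}, and one has to reconcile two competing demands: analyticity forces each homogeneous term of the local Taylor expansion to be a continuous (hence $\|\cdot\|_{\infty}$-bounded) form and forces the local power series to converge on a genuine neighbourhood, whereas equivalence to $\|\cdot\|_{\infty}$ forbids the gauge from being weakly sequentially continuous on bounded sets and so rules out any finite-degree (polynomial) substitute. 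The reconciliation is exactly where incompleteness of $Y$ is indispensable: since $\ell_{\infty}$ is not Asplund it admits no analytic renorming, so the analytic gauge on $Y$ \emph{cannot} extend to $\ell_{\infty}$; the mechanism is that the radius of convergence of the local expansions is arranged to degenerate to $0$ as one approaches the points of $\ell_{\infty}\setminus Y$, while remaining positive throughout $Y$.

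Thus the heart of the argument, and the step I expect to be hardest, is the quantitative construction in (i): choosing the dense incomplete $Y$ together with the profile and the summable weights so that the soft-maximum series is at once analytic on all of $Y$ and two-sidedly comparable to $\|\cdot\|_{\infty}$. Once the comparison estimates (the sandwiching of the unit ball) and the convergence estimates for the local power series are in hand, analyticity is a pointwise verification at each $x\in Y$, and (ii)--(iii) follow from the restriction principle above after checking that the prescribed dense subspaces of the embedded copies can indeed be incorporated into the spanning system of $Y$.
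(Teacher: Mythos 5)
Your high-level strategy --- a single globally convergent analytic series in the coordinate functionals, defined on a dense incomplete subspace $Y$ of $\ell_\infty$, followed by a restriction principle to deduce (ii) and (iii) --- is the same as the paper's, and your structural remarks (no analytic partitions of unity, the local radii of convergence must degenerate at points of $\ell_\infty\setminus Y$) are accurate. But the proposal stops exactly where the proof begins: the ``quantitative construction'' you defer \emph{is} statement (i), and the recipe you sketch does not work as described. A soft maximum $\sum_i w_i\rho(x(i))$ with a \emph{fixed} entire even convex profile $\rho$ and summable weights fails the lower comparison with $\n_\infty$: since $w_i\to 0$, the sublevel set $\{\sum_i w_i\rho(x(i))\le 1\}$ contains multiples $t\,e_i$ of the unit vectors with $t\to\infty$, so it is unbounded and its Minkowski functional is not an equivalent norm. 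The profile must sharpen along the coordinates; the paper takes $\p(x)=\sum_i x(i)^{2i+p}$, so that each summand equals $1$ when $|x(i)|=1$ yet is summably small when $|x(i)|\le q<1$. Even then two further devices are needed that your sketch omits: $\p$ is only defined on the open convex set $U$ of those $x$ with $|x(i)|<q<1$ eventually, and one must compose with the diagonal dilation $T(x)=((1+\e_i)x(i))_i$ so that the unit ball of $Z=T(\ell_\infty^F)$ lies inside $U$. Here the \emph{finitely-valued} hypothesis is essential (it forces $\sup_i|x(i)|<1$ strictly on $B_Z$); without it the series diverges already at $x=(1,1,1,\dots)$, which belongs to $\ell_\infty^F$ and a fortiori to the kind of subspace you describe.

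For (ii) and (iii) the restriction principle you invoke is correct, but your reduction --- arranging that $Y$ literally \emph{contains} isometric copies of prescribed dense subspaces of $X$, resp.\ of $\ell_1(\mathfrak{c})$ --- is neither proved nor plausible: absorbing $\mathfrak{c}$ many arbitrary vectors of $\ell_\infty$ into the spanning system would destroy the inclusion of the unit ball in the domain of convergence that makes the series analytic. The paper avoids this entirely by a perturbation argument: after embedding $X$ (or $\ell_1(\mathfrak{c})$, via $C[0,1]^*$) isometrically into $\ell_\infty$, one replaces each basis vector $e_\lambda$ by a nearby $x_\lambda\in\ell_\infty^F$ and checks that $e_\lambda\mapsto x_\lambda$ extends to an \emph{isomorphism} of the spans --- immediate for $\ell_1(\mathfrak{c})$ since $\sum_\lambda|d_\lambda|\,\|x_\lambda-e_\lambda\|_\infty<\e\sum_\lambda|d_\lambda|$, and achieved in the separable case by first constructing a biorthogonal system that bounds the coefficients ($|\alpha_i|\le 2^{i-1}$ on the unit sphere) so that $4^{-n}$-approximations suffice. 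An isomorphic copy inside $\ell_\infty^F$ is all that is needed, because admitting an analytic norm is an isomorphic property. You should replace the absorption step by such a perturbation lemma, and supply the explicit series, domain $U$, dilation $T$, and the Minkowski-functional lemma (Lemma \ref{minkowlemma}) in part (i).
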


	We notice that (iii) is particularly surprising since the corresponding result for $c_0(\omega_1)$ fails to hold (see Theorem \ref{c0}), even though $c_0(\Gamma)$-spaces have much better smoothness properties than $\ell_1(\Gamma)$'s. Moreover, item (ii) answers positively \cite[Remark 3.4]{DFH}, where it was asked whether, in normed spaces with countable algebraic basis, every equivalent norm can be approximated by analytic norms.
\smallskip

	We also prove the following result, concerning $C^\infty$-smoothness.
	
	\begin{theoremB}  Let $X$ be a Banach space with long unconditional Schauder basis and let $Y$ be the linear span of such basis. Then, $Y$ admits a $C^{\infty}$-smooth norm.
	\end{theoremB}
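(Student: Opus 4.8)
The plan is to exploit the single structural feature that distinguishes $Y$ from $X$: every element of $Y=\Span\{e_\alpha\}_{\alpha<\mu}$ has finite support. I would first renorm $X$ so that the basis becomes $1$-unconditional; since the existence of an equivalent $C^\infty$-smooth norm on $Y$ is insensitive to replacing the norm of $X$ by an equivalent one, there is no loss in assuming that $\|\cdot\|$ is a monotone lattice norm in the coordinates $a_\alpha=e_\alpha^*(y)$, i.e. $\|\,|y|\,\|=\|y\|$ and $\|y\|\le\|z\|$ whenever $|a_\alpha|\le|b_\alpha|$ for all $\alpha$. The target is then to produce an equivalent norm $N$ on $Y$ that depends locally on finitely many coordinates (LFC): for every $y\neq0$ there should be a finite set $F$ and a neighbourhood $U$ of $y$ on which $N$ is a function of $(a_\alpha)_{\alpha\in F}$ alone. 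Since an LFC norm is $C^\infty$ as soon as it is $C^\infty$ on each finite-dimensional coordinate section, this reduces the problem to a local, finite-dimensional smoothing question, which is where the finite-support hypothesis pays off.

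To build such an $N$ I would realise it as the Minkowski gauge of a symmetric convex body $B=\{F\le1\}$, where $F\colon Y\to[0,\infty)$ is a $C^\infty$ convex coercive function with $F(0)=0$ and $F'\neq0$ on $\{F=1\}$; the gauge of such a body is a genuine equivalent $C^\infty$ norm by the standard implicit-function-theorem argument applied to $t\mapsto F(y/t)$. The function $F$ must achieve two things at once: it must be comparable to $\|\cdot\|$ (so that $B$ is squeezed between two multiples of the original ball) and it must be LFC. For the second requirement I would assemble $F$ from coordinate-wise building blocks $\theta\circ e_\alpha^*$, together with terms depending on $\|P_G\,\cdot\|$ over finite sets $G$, where $\theta$ is an even convex $C^\infty$ function that vanishes to infinite order on a neighbourhood of $0$ and grows past a threshold. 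The infinite-order flatness guarantees that, near a fixed $y$, every coordinate $\alpha$ with $|a_\alpha|$ below the threshold contributes nothing and has vanishing derivatives of all orders; since $y$ has finite support this leaves only finitely many active coordinates, yielding both local finiteness of the defining sum and the LFC property.

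The reason a single coordinate-wise Orlicz/Luxemburg gauge does not suffice is that a general unconditional norm need not be an Orlicz ($\ell_p$-like) norm, so the actual norm $\|\cdot\|$ must be fed into $F$ to secure equivalence. I would therefore interleave the flat coordinate blocks with a smoothed version of $\|\cdot\|$ obtained by a finite-section convolution or sup-convolution performed consistently across the directed family $\{Y_F\}$ of finite-dimensional sections; on each section this is the classical fact that a symmetric convex body in $\R^n$ can be approximated two-sidedly by a $C^\infty$ body, and the flatness mechanism of the previous paragraph is what lets these finite-dimensional smoothings be glued into one globally defined function without destroying smoothness as the support grows.

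I expect the main obstacle to be carrying out this assembly so that convexity, two-sided equivalence, and $C^\infty$-smoothness hold simultaneously and uniformly across a possibly uncountable index set. Convexity constrains how freely one may combine the flat blocks with the smoothed norm; equivalence requires the comparison constants to be independent of $\alpha$ and of the support size; and the verification that $F'\neq0$ on $\{F=1\}$ (needed for the gauge to be $C^\infty$ rather than merely continuous) must survive the flattening, which suppresses many derivatives. The saving grace throughout is finite support: it converts every infinite sum into a locally finite one and every global smoothness check into a finite-dimensional one, and it is precisely this that makes the $C^\infty$ conclusion available on $Y$ even for spaces such as $c_0(\omega_1)$, no dense subspace of which carries an analytic norm.
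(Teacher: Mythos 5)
Your overall architecture --- reduce to a ($1$-un)conditional basis, aim for a norm that is locally dependent on finitely many coordinates, build it as the Minkowski gauge of $\{F\leq 1\}$ for a convex $C^\infty$ function $F$ assembled from finite-dimensional smooth approximations of $\|P_G(\cdot)\|$ glued by convex bumps that are flat near the origin, and finish with the implicit-function-theorem lemma for Minkowski functionals --- is exactly the skeleton of the paper's proof of Theorem B (Theorem \ref{unconditional}). But the proposal is missing the one device that makes the gluing work, and the obstacle you flag at the end is precisely where the argument would break down as written. Your locality mechanism is coordinate-wise: a coordinate $\alpha$ with $|a_\alpha|$ below a threshold contributes nothing. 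That handles the Orlicz-type blocks $\theta\circ e_\alpha^*$, but it does nothing for the norm-dependent terms $\|P_G(\cdot)\|$, which you correctly observe are indispensable for two-sided equivalence. Near a point $x$ with $\supp x=B$ there are infinitely many finite sets $G$ with $G\cap B\neq\emptyset$ and $G\nsubseteq B$, and for such $G$ suppression unconditionality gives $\|P_G(y)\|\geq\|P_{G\cap B}(y)\|$, which can be as large as (essentially) $\|x\|$ itself. So composing $\|P_G(\cdot)\|$ with a function flat on $[0,1-\delta]$ does \emph{not} annihilate these terms, the defining sum is not locally finite, and neither smoothness nor the LFC property follows.

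The paper's solution is a quantitative gap created by a strictly decreasing weight sequence: one works with $\|x\|_f=\sup_{|A|<\omega}(1+\e_{|A|})\|P_A(x)\|$, where $(\e_k)$ decreases to $0$ and the ratios satisfy $\frac{1+\e_{k+1}}{1+\e_k}<1-2\theta_{k+1}$ for an auxiliary sequence $(\theta_k)$. If $A\nsubseteq\supp x$ then $|A\cap\supp x|\leq|A|-1$, so $(1+\e_{|A|})\|P_A(x)\|\leq\frac{1+\e_{|A|}}{1+\e_{|A\cap\supp x|}}\leq\frac{1+\e_{|A|}}{1+\e_{|A|-1}}$, which sits \emph{uniformly strictly below} the activation threshold $1-\theta_{|A|}^2$ of the bump $\rho_{|A|}$, and this persists on a whole $\n_f$-neighbourhood of $x$ of radius $\theta_{|\supp x|+1}$. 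Consequently only the $2^{|\supp x|}$ sets $A\subseteq\supp x$ contribute to $\Psi(y)=\sum_{|A|<\omega}\rho_{|A|}\bigl((1+\e_{|A|})(1+\theta_{|A|})\|P_A(y)\|_{(s),A}\bigr)$ near $x$, which is what delivers local finiteness, convexity (a sum of convex functions), equivalence, and $C^\infty$-smoothness simultaneously. Without this weighting (or an equivalent substitute) your ``finite-section convolution performed consistently across the directed family'' remains a wish rather than a construction; I would encourage you to make the comparison between the weight attached to $A$ and the weight attached to $A\cap\supp x$ the centrepiece of the argument.
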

	
In case of $\ell_1(\Gamma)$, Theorem B solves \cite[Problem 149]{GMZ}. Interestingly, Michal Johanis \cite{Johanis} recently proved that, under rather general assumptions, a normed space with a $C^k$-smooth bump also admits $C^k$-smooth partitions of the unity (see, \emph{e.g.}, \cite[Corollary 6]{Johanis}). It then follows that the normed spaces $Y$ considered in Theorem B also admit $C^\infty$-smooth partitions of the unity.

\smallskip
Finally, in the last part of our note, we prove a variant (for dense subspaces) of a well-known Partington's theorem \cite{P} concerning the existence of isometric copies of $\ell_{\infty}^c(\omega_1)$ in every renorming of $\ell_{\infty}^c(\omega_1)$ (we are grateful to Gilles Godefroy for suggesting us to dig in Partington's argument). Here, by $\ell_{\infty}^c(\Gamma)$ we understand the Banach space of all bounded scalar-valued functions on $\Gamma$ that are non-zero on at most countably many points in $\Gamma$, endowed with the canonical sup-norm; moreover, we denote by $\ell_{\infty}^{c,F}(\Gamma)$ its dense subspace comprising those functions that are finitely valued.
	
	\begin{theoremC} Every renorming of the space $\ell_{\infty}^{c,F}(\omega_1)$ contains an isometric copy of $\ell_{\infty}^{c,F}(\omega_1)$.
	\end{theoremC}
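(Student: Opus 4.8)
The plan is to fix an arbitrary equivalent norm $\nn{\cdot}$ on $Y:=\ell_{\infty}^{c,F}(\omega_1)$, with $a\Vert x\Vert_\infty\le\nn{x}\le b\Vert x\Vert_\infty$ for suitable $0<a\le b$, and to reduce the statement to the manufacture of an uncountable family of pairwise disjoint \emph{infinite} countable sets $(A_\alpha)_{\alpha<\omega_1}$ in $\omega_1$, whose indicators $\mathbf 1_{A_\alpha}$ (being finitely valued with countable support) lie in $Y$, such that
\[
\nn{\sum_{\alpha\in F}t_\alpha\,\mathbf 1_{A_\alpha}}=\max_{\alpha\in F}\abs{t_\alpha}
\]
for every finite $F\subseteq\omega_1$ and all scalars $(t_\alpha)$. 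Granting this, the assignment $e_\alpha\mapsto\mathbf 1_{A_\alpha}$ extends to a linear isometry of $(c_{00}(\omega_1),\Vert\cdot\Vert_\infty)$ onto $\Span\{\mathbf 1_{A_\alpha}:\alpha<\omega_1\}$ equipped with $\nn{\cdot}$, which is precisely the conclusion. Thus the entire difficulty is concentrated in selecting blocks $A_\alpha$ that realize the supremum norm \emph{exactly}.

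The blocks are built by transfinite recursion on $\alpha<\omega_1$, following Partington's mechanism. At stage $\alpha$ only countably many $A_\beta$ ($\beta<\alpha$) have been fixed, each countable, so their union is a countable subset of $\omega_1$ and leaves an uncountable reservoir from which to draw $A_\alpha$. The selection is driven by the Partington-type stabilization: a map sending each candidate block to a point of a separable metric space must be constant up to any prescribed $\varepsilon$ on an uncountable subfamily (an uncountable set cannot be covered by countably many small pieces). I would apply this to the real-valued \emph{norm profiles} $t\mapsto\nn{x+t\,\mathbf 1_{B}}$ attached to candidate disjoint blocks $B$ and finitely many already-fixed coefficient patterns $x$; since these are $b$-Lipschitz convex functions on a bounded interval, they range in a separable compact set, so one refines to an uncountable supply of blocks along which all relevant profiles agree to within tolerance, yielding an \emph{almost} isometric selection. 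Alongside, I would carry for each block a $\nn{\cdot}$-norming functional, stabilized so as to be negligible on the remaining blocks, to secure the lower estimate $\nn{\sum t_\alpha\mathbf 1_{A_\alpha}}\ge\max\abs{t_\alpha}$; this is where the uncountability of $\omega_1$ is essential.

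The genuine obstacle is upgrading the approximate stabilization to the exact identity above, and doing so \emph{inside} the non-complete space $Y$, where the weak$^{*}$-compactness available in Partington's original dual setting of $\ell_\infty(\omega_1)$ is not at our disposal. The device resolving both points is to take each $A_\alpha$ infinite: writing $A_\alpha=\bigsqcup_{j}A_\alpha^{(j)}$ as a disjoint union of infinitely many infinite sets, one runs a convexity/sliding-hump argument \emph{within} the single block, using the infinitely many interchangeable coordinate directions to absorb the accumulated discrepancies and force the upper estimate $\nn{\sum t_\alpha\mathbf 1_{A_\alpha}}\le\max\abs{t_\alpha}$ to become an equality in the limit; crucially, the limiting vector is still a countably supported, finitely valued indicator, hence remains in $Y$, which is exactly why the theorem is phrased for $\ell_{\infty}^{c,F}(\omega_1)$ and produces the incomplete $c_{00}(\omega_1)$ rather than $c_0(\omega_1)$. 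I expect this exactness step to be the heart of the matter: the almost-isometric stabilization is comparatively routine Ramsey-type bookkeeping, whereas passing to exact equalities simultaneously for all finite coefficient patterns, uniformly across the $\omega_1$-many blocks, and without ever leaving the dense subspace $Y$, is the point at which Partington's argument must be genuinely re-engineered for the non-complete setting.
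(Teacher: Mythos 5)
Your overall target --- uncountably many disjointly supported, finitely valued blocks on which the new norm coincides exactly with the sup norm --- is the right one, and your lower-bound device (a norming functional of a countably supported vector is itself ``countably supported'', so disjoint perturbations off a countable forbidden set cannot decrease the norm) is exactly the paper's Lemma \ref{lemma1} and Corollary \ref{Cor: forbidden set}. But the step you yourself identify as the heart of the matter --- upgrading approximate stabilization to \emph{exact} equality --- is not actually supplied, and the mechanism you propose for it does not work. An arbitrary equivalent norm $\nn{\cdot}$ on $\ell_\infty^{c,F}(\omega_1)$ has no permutation symmetry whatsoever, so the ``infinitely many interchangeable coordinate directions'' inside a block $A_\alpha=\bigsqcup_j A_\alpha^{(j)}$ are not interchangeable for $\nn{\cdot}$, and a sliding-hump/convexity argument within one block has nothing to slide against. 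Likewise, the separability pigeonhole on norm profiles only ever yields agreement up to a prescribed $\e>0$ on an uncountable subfamily; iterating it over countably many tolerances still leaves you with approximate identities, and there is no limit procedure in your sketch that converts these into exact ones.

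The paper achieves exactness by a different and quite concrete device (Claim \ref{Claim: constant norm}), which is the part of Partington's argument you would need to reproduce: each block is built by an $\omega$-step greedy iteration. One chooses disjointly supported $v_0,v_1,\dots\in\mathcal U$ and countable forbidden sets $A_0\subseteq A_1\subseteq\cdots$ so that, with $\alpha_n:=\sup\{\|v_0+\cdots+v_n+u\|\colon \supp u\subseteq S\setminus A_n\}$, the next vector $v_{n+1}$ nearly attains this supremum: $\|v_0+\cdots+v_{n+1}\|\geq\alpha_n-\e_n$. For the limit $V=\sum_j v_j$ and any admissible $u$ one then gets the two-sided squeeze $\|V\|\leq\|V+u\|\leq\alpha_n\leq\|V\|+\e_n$ for every $n$, hence exact equality --- no symmetry of the norm is used, only that the supremum $\alpha_n$ is taken over \emph{all} admissible perturbations, including the tail $\sum_{j>n}v_j$ itself. (Note also that completeness plays no role here, so your concern about losing weak$^*$-compactness is moot; Hahn--Banach on the incomplete space suffices.) Finally, your sketch omits the second half of the argument: even once each $V_\alpha$ satisfies $\|V_\alpha+u\|=\|V_\alpha\|$ for admissible $u$, one still needs the map $\alpha\mapsto\|V_\alpha\|$ to be eventually constant (this uses the uncountable cofinality of $\omega_1$ and the convexity inequality $\|V_\beta\|\leq\tfrac12\|V_\alpha+V_\beta\|+\tfrac12\|V_\alpha-V_\beta\|$) before the standard convexity argument yields $\|\sum c_jV_{\alpha_j}\|=c\max|c_j|$. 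That part is routine, but it is not the ``norming functional'' bookkeeping you describe.
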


An immediate consequence of the present Theorem is that $\ell_{\infty}^{c,F}(\omega_1)$ admits no G\^{a}teaux smooth norm, in sharp contrast with Theorem A(i).

	\section{Preliminaries}\label{Sec: Prel}

	Let us present all the necessary background in order to avoid the reader jumping into specific references very often. The spaces we are considering throughout the paper are {\it real} normed spaces. We are following the notation from Banach space theory taken mainly from the books \cite{AK, FHHMZ, HJ}.
	
	\vspace{0.2cm}
	
	Let $X, Y$ be normed linear spaces. We say that the norm $\n$ of  $X$ is {\it $C^k$-smooth} if its $k$th Fr\'echet derivative exists and is continuous at every point of $X\setminus\{0\}$. The norm is said to be {\it $C^{\infty}$-smooth} if this holds for every $k \in \N$. We denote by $\mathcal{P}(^n X; Y)$ the normed linear space of all $n$-homogeneous continuous polynomials from $X$ into $Y$. If $U \subset X$ is an open subset, then we say that a function $f: U \rightarrow Y$ is {\it analytic} if, for every $a \in U$, there exist $P_n \in \mathcal{P}(^n X; Y)$ ($n\in\N \cup \{0\}$) and $\delta > 0$ such that, for all $x \in U(a, \delta)$,
	\begin{equation*}
	f(x) = \sum_{n=0}^{\infty} P_n (x - a).
	\end{equation*}	
	We denote by $C^{\omega}(U; Y)$ the vector space of all analytic functions from $U$ into $Y$. If $X, Y$ are Banach spaces over the field $\K$ (which can be the set of the real or complex numbers), $P_n \in \mathcal{P}(^n X; Y)$ for $n \in \N \cup \{0\}$, and $U \subset X$ is the domain of convergence of the power series $\sum_{n=0}^{\infty} P_n$,	
	then $f(x) = \sum_{n=0}^{\infty} P_n(x)$ is analytic in $U$ (see, for example, \cite[Chapter 1, Theorem 168]{HJ}). We are using this fact in the next section in Theorem \ref{equiv-analytic-norm-main}.

	\vspace{0.2cm}
	
	Let us recall that a convex set $C$ is said to be a {\it convex body} if its interior is not empty.	It is well-known that every bounded symmetric convex body induces an equivalent norm on $X$ via its {\it Minkowski functional}, which is defined as 
	\begin{equation*}
	\mu_B(x) := \inf \{t > 0: x \in tB \} \ \ \ (x \in X).
	\end{equation*}
	Throughout the paper, we are using the following property without any explicit reference. If $B \subset C \subset (1 + \delta) B$ for some $\delta > 0$, then we have that
	\begin{equation*}
	\frac{1}{1 + \delta} \mu_B \leq \mu_C \leq \mu_B.
	\end{equation*}
	By a \emph{renorming} of a space, we understand replacing a given norm $\n$ on a normed space $X$ with an equivalent norm which satisfies some desired property. Let us also clarify what we mean by approximate a norm. Given a normed space $(X,\n)$ and $\e > 0$, we say that a new norm $\nn\cdot$ {\it $\e$-approximates} $\n$ if
	\begin{equation*}
	\nn\cdot\leq\n\leq(1+\e)\nn\cdot.
	\end{equation*}

	In our main results, we are using the following technical lemma. The statement  we are presenting here can be found essentially in \cite[Chapter 5, Lemma 23]{HJ}. It is worth mentioning that it concerns normed spaces and it does not require completeness.
	
	\begin{lemma}[Implicit function theorem for Minkowski functionals] \label{minkowlemma} Let $(X, \n)$ be a normed space and $D$ be a nonempty open convex symmetric subset of $X$. Let $f: D \rightarrow \R$ be even, convex, and continuous. Suppose that there is $a > f(0)$ such that the level set $B := \{f \leq a\}$ is bounded and closed in $X$. Assume further that there is an open set $O$ with $\{f = a\} \subset O$ such that $f$ is $C^k$-smooth on $O$, where $k \in \N \cup \{\infty, \omega\}$. Then, the Minkowski functional $\mu$ on $B$ is an equivalent $C^k$-smooth norm on $X$.	
	\end{lemma}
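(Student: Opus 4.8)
The plan is to realize $\mu$ as the solution of a \emph{scalar} implicit equation governed by $f$, and then to read off its regularity from that of $f$ via the implicit function theorem. First I would check that $\mu$ is an equivalent norm. Since $f$ is convex the sublevel set $B=\{f\leq a\}$ is convex, since $f$ is even it is symmetric, and since $f$ is continuous with $f(0)<a$ the open set $\{f<a\}$ contains $0$, so $0\in\operatorname{int}B$ and $B$ contains a ball $U(0,r)$. Together with the hypotheses that $B$ is bounded and closed, this makes $B$ a bounded closed symmetric convex body with $0$ in its interior; its Minkowski functional $\mu=\mu_B$ is therefore positively $1$-homogeneous, subadditive (from convexity of $B$) and symmetric, and the inclusions $U(0,r)\subseteq B\subseteq R\,U(0,1)$ give $\tfrac1R\n\leq\mu\leq\tfrac1r\n$, so $\mu$ is an equivalent norm.

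Next I would identify the unit sphere and, crucially, the transversality. I claim $\{\mu=1\}=\partial B=\{f=a\}$: any point with $f<a$ is interior and any point with $f>a$ lies outside the closed set $B$, so $\partial B\subseteq\{f=a\}$; conversely, for $x_0$ with $f(x_0)=a$ consider $g(t):=f(tx_0)$, which is convex with $g(0)=f(0)<a=g(1)$, so its (nondecreasing) slope forces $g(t)>a$ for $t>1$ and hence $x_0\in\partial B$. The \emph{same} computation yields the non-degeneracy that drives everything: since $x_0\in O$ is an interior point, $tx_0\in O$ for $t$ near $1$, so $g$ is $C^k$ there ($k\geq1$), and convexity gives
\[
f'(x_0)[x_0]=g'(1)\geq g(1)-g(0)=a-f(0)>0.
\]

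With this in hand I would fix $x_0$ on the sphere, set $G(x,s):=f(sx)$ for $(x,s)$ near $(x_0,1)$, and solve $G(x,s)=a$ for $s$. Because $(x,s)\mapsto sx$ is analytic and $f$ is $C^k$ (respectively analytic) on $O$, the composition $G$ is $C^k$ (respectively analytic) near $(x_0,1)$, with $\partial_sG(x_0,1)=f'(x_0)[x_0]>0$ by the inequality above. The key point, which is exactly why completeness of $X$ is unnecessary, is that the unknown $s$ is a real scalar: the implicit function theorem only requires dividing by the nonzero number $\partial_sG$, so no inversion of an operator on the (possibly incomplete) space $X$ is involved, and the statement holds verbatim in all classes $k\in\N\cup\{\infty,\omega\}$ (the case $k=\omega$ being covered by the analytic implicit function theorem). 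This produces a $C^k$ map $x\mapsto s(x)$ with $s(x_0)=1$ and $f(s(x)x)=a$; by the characterization of the sphere, $s(x)=1/\mu(x)$, whence $\mu(x)=1/s(x)$ is $C^k$ on a neighborhood of $x_0$. Finally I would globalize by homogeneity: every $x\in X\setminus\{0\}$ is $\lambda x_0$ with $\lambda=\mu(x)>0$ and $x_0=x/\mu(x)\in\{f=a\}$, so $\mu$, being positively $1$-homogeneous and $C^k$ near $x_0$, is $C^k$ on a conical neighborhood of $x$; these cones cover $X\setminus\{0\}$.

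I expect the main obstacle to be two intertwined points rather than any long computation: extracting the transversality inequality $f'(x_0)[x_0]>0$ \emph{purely} from convexity of $f$ (this is the only place convexity is essential for smoothness, and it is what guarantees that $\{f=a\}$ is a genuine non-degenerate level set transverse to radial directions), and invoking the implicit function theorem legitimately in the incomplete setting. The latter is admissible precisely because the implicit equation is scalar in $s$, a subtlety worth stating explicitly since it is the reason the lemma dispenses with completeness and still captures the analytic case $k=\omega$.
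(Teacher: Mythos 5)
Your proof is correct, and it is essentially the standard argument behind this lemma: the paper itself gives no proof, citing \cite[Chapter 5, Lemma 23]{HJ}, and the proof there proceeds exactly as you do --- identify $\{\mu=1\}$ with $\{f=a\}$, extract the radial transversality $f'(x_0)[x_0]\geq a-f(0)>0$ from convexity, and apply the implicit function theorem to the scalar equation $f(sx)=a$. You also correctly isolate the two points that make the lemma work without completeness (the unknown lives in $\R$) and with the closedness of $B$ \emph{in $X$} (which the paper itself emphasizes via a counterexample), so nothing is missing.
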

	
	Let us notice that, in the assumption of the previous result, the set $B$ is assumed to be {\it a closed subset of $X$} and not merely a closed subset of $D$. Actually, $B$ being a closed subset of $D$, which is just a consequence of the continuity of $f$, would be not enough to conclude the result. Indeed, consider the space $X:=\ell_\infty^2$ and let $D$ be the open unit ball of $X$, where $\ell_{\infty}^2$ is $(\R^2, \n_{\infty})$. Define $f$ to be $0$ on $D$, which is obviously even, convex, and continuous on $D$. The set $B = \{ f \leq 1 \}$ is then $D$ itself (which is bounded, but not closed on $X$) and $\{f=1\}$ is the empty set. So, we can take $O = \emptyset$, where $f$ is trivially $C^{\infty}$-smooth. On the other hand, the Minkowski functional of $B = D = B_X$ is the norm of $X$, which is not differentiable.

	\vspace{0.2cm}	
	
	Some background on set theory is also needed. We adopt von Neumann's definition of ordinal number as the set of its predecessors. We regard cardinal numbers as initial ordinal numbers; accordingly, we write $\omega$ for $\aleph_0$, $\omega_1$ for $\aleph_1$, etc. We denote by $\mathfrak{c}$ the cardinality of continuum. The cardinality of a set $A$ will be denoted $|A|$. For a cardinal $\kappa$, we denote by $\kappa^+$ the smallest cardinal number that is strictly greater than $\kappa$. Finally, given a cardinal number $\kappa$, we denote by ${\rm cf}\,\kappa$ the \emph{cofinality} of $\kappa$, that is, the smallest cardinal number $\lambda$ such that $\kappa$ can be written as a union of $\lambda$ many sets each of cardinality less than $\kappa$. We refer to \cite{J} or \cite{K} for more on set-theoretical background.

	\section{Analytic norms}\label{Sec: Analytic}
	
	In this section we show that some dense subspaces of some Banach spaces admit an analytic renorming that approximates the original norm of the space. As a consequence, we prove Theorem A.
	
    In our first result, we are dealing with the normed space $\ell_\infty^F$, which consists of all finitely-valued sequences in $\ell_{\infty}$. Note that it is a dense linear subspace of $\ell_{\infty}$.

	\begin{theorem} \label{equiv-analytic-norm-main} The space $\ell_{\infty}^F$ admits an analytic norm that approximates $\n_{\infty}$.
	\end{theorem}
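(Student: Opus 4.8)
The plan is to apply Lemma~\ref{minkowlemma}: it suffices to produce, for each $\e>0$, an even, convex, continuous function $f\colon\ell_\infty^F\to\R$ which is analytic on an open neighbourhood of one of its level sets $\{f=a\}$ (with $a>f(0)$), and whose sublevel set $B:=\{f\le a\}$ is bounded, closed in $\ell_\infty^F$, and trapped between $B_\infty\subseteq B\subseteq(1+\e)B_\infty$, where $B_\infty$ denotes the closed unit ball of $\n_{\infty}$. Granting this, the Lemma yields that $\mu_B$ is an equivalent analytic norm, while $B_\infty\subseteq B$ gives $\mu_B\le\n_{\infty}$ and $B\subseteq(1+\e)B_\infty$ gives $\n_{\infty}\le(1+\e)\mu_B$, i.e.\ an analytic norm $\e$-approximating $\n_{\infty}$.

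For the analyticity I would start from the ``soft maximum''
\[
G(x)=\sum_{n} a_n\cosh(\beta x_n)\qquad(a_n>0,\ \textstyle\sum_n a_n=1,\ \beta>0).
\]
Expanding $\cosh$ and interchanging the summations writes $G=\sum_{k\ge 0}Q_k$ with $Q_k=\tfrac{\beta^{2k}}{(2k)!}P_{2k}$ and $P_{2k}(x)=\sum_n a_nx_n^{2k}\in\mathcal P(^{2k}\ell_\infty;\R)$, so that $\|Q_k\|\le\tfrac{\beta^{2k}}{(2k)!}$. Since $\sum_k\|Q_k\|r^{2k}\le\cosh(\beta r)<\infty$ for every $r$, the power series has domain of convergence all of $\ell_\infty$, and by the theorem quoted in the Preliminaries $G$ is analytic on $\ell_\infty$; moreover $G$ is even and convex, being a sum of the convex even functions $x\mapsto\cosh(\beta x_n)$. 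Thus $G$ supplies an entire, convex, even analytic function that is large precisely where some coordinate of $x$ is large.

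The defect of $G$ is that its sublevel sets are not trapped in $(1+\e)B_\infty$: because $(a_n)$ must be summable, a single coordinate $e_n$ of small weight is ``cheap'', and $\mu_{\{G\le a\}}(e_n)\to 0$, so $G$ alone is not equivalent to $\n_{\infty}$. This is unavoidable for any coordinate-weighted sum, and even for a single continuous homogeneous polynomial, since finiteness of the value at $\mathbb 1=\chi_\N\in\ell_\infty^F$ forces the diagonal coefficients to be summable and hence to vanish along $e_n$. The point where $\ell_\infty^F$, as opposed to $\ell_\infty$, enters is that its elements have finite range: near a point of the unit sphere only finitely many value-clusters occur, so the maximum is governed by coordinates sharing a common, near-extremal value. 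The strategy is therefore to repair $G$ by an analytic term that aggregates coordinates according to their finitely many common values, so that every coordinate is seen with unit strength at large scale while the value at $\mathbb 1$ stays finite; tuning $\beta$ and $(a_n)$ then pushes $B$ inside $(1+\e)B_\infty$.

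The main obstacle is precisely this trapping step. Analyticity is a local condition, so one neither can nor needs to use a single entire formula, but one must nonetheless guarantee simultaneously (a) that the repaired $f$ remains convex and analytic on a neighbourhood of $\{f=a\}$, which requires that near the sphere the relevant expression be \emph{locally finite} despite the possibility that infinitely many coordinates attain the maximum, and (b) the two-sided inclusion $B_\infty\subseteq B\subseteq(1+\e)B_\infty$. Exploiting finite-valuedness to reduce the construction near the sphere to a locally finite analytic expression, and then verifying convexity of the resulting $f$, is the technical heart; once it is in place, Lemma~\ref{minkowlemma} immediately delivers the desired analytic approximating norm.
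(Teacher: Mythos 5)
Your reduction to Lemma~\ref{minkowlemma} and your diagnosis of the obstruction are both correct, but the proof is not complete: the construction of the even, convex function that is analytic near its level set and has the trapped sublevel set --- which you yourself call ``the technical heart'' --- is left as a programme rather than carried out. Moreover, the direction you propose for completing it (repairing $G$ by a term that ``aggregates coordinates according to their finitely many common values'') is doubtful: the partition of $\N$ induced by the values of $x$ changes discontinuously as $x$ varies, so any expression built from it threatens to destroy exactly the convexity and local analyticity you need, and you give no indication of how this is to be controlled.

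The paper resolves the difficulty quite differently, in a way that shows your ``unavoidability'' observation, while essentially correct for everywhere-finite coordinate-wise sums, is aimed at the wrong target. It uses $\p(x)=\sum_i x(i)^{2i+p}$ with $p$ a large even integer: the weights are all equal to $1$, so $\p(e_i)=1$ for every $i$ and no coordinate is cheap; the price is that $\p$ diverges at $(1,1,1,\dots)$, which is harmless because Lemma~\ref{minkowlemma} only needs the function on an open convex set containing the relevant level set. Concretely, $\p$ converges and is analytic on the open convex set $U$ of those $x$ with $\|x\|_\infty<2$ whose coordinates are eventually bounded in modulus by some $q<1$. The remaining problem --- that the unit sphere of $\ell_\infty^F$ contains points such as $(1,1,1,\dots)$ lying outside $U$ --- is handled by the diagonal isomorphism $T(x)=\bigl((1+\e_i)x(i)\bigr)_i$ with $\e_i\searrow0$: if $x$ is finitely valued and $\|Tx\|_\infty\leq1$, then the finitely many values of $|x(i)|$ are all strictly below $1$, so the coordinates of $Tx$ eventually stay below some $q<1$ and $Tx\in U$. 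One applies the lemma on $Z=T(\ell_\infty^F)$ and pulls the norm back through $T$. So the missing idea is not a new analytic summand but a change of coordinates that moves the sphere inside the domain of convergence of a series divergent at $(1,1,1,\dots)$; this is precisely where the finite-valuedness of $\ell_\infty^F$ enters, and it is the step your proposal does not supply.
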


	\begin{proof} In order to prove this result, we use the Implicit function theorem for Minkowski functionals as stated in Lemma \ref{minkowlemma}. Denote by $x=(x(i))_{i=1}^{\infty}$ an element of $\ell_{\infty}$ and consider the following set
		\begin{equation*}
		U := \left\{ x \in \ell_{\infty}\colon \|x\|_{\infty} < 2 \ \mbox{and} \ \exists j \in \N, q \in (0,1)  \ \mbox{such that} \ |x (i)| < q, \forall i > j \right\}.	
		\end{equation*}
Clearly, $U$ is open and convex.\smallskip

		Now, we define $\p\colon U \rightarrow [0, \infty)$ by
		\begin{equation*}
		\p (x) := \sum_{i=1}^{\infty} x(i)^{2i+p} \ \ \ (x \in U)	
		\end{equation*}
		where $p$ is an even integer to be fixed later. Note that if $x \in U$, then there is $j \in \N$ and $q \in (0, 1)$ such that $|x(i)| < q$ for all $i > j$. Thus,
		\begin{equation*}
		\sum_{i=1}^{\infty}	x(i)^{2i + p} = \sum_{i=1}^j x(i)^{2i+p} + \sum_{i > j} x(i)^{2i+p} < \sum_{i=1}^j x(i)^{2i+p} + \sum_{i > j} q^{2i+p} < \infty.
		\end{equation*}
		Therefore, $\p$ is well-defined and it is analytic on $U$ (see, \emph{e.g.}, \cite[Chapter 1, Theorem 168]{HJ}).\smallskip

		Let $(\e_n)_{n=1}^{\infty}$ be a sequence of positive real numbers such that $\e_1 < 1$, $\e_{n+1} < \e_n$ for each $n \in \N$, and $\lim_{n \rightarrow \infty} \e_n = 0$. Define $T\colon \ell_{\infty}^F \rightarrow \ell_{\infty}$ by $T(x) := ((1+\e_i)x(i))_i$  for every $x=(x(i))_i \in \ell_{\infty}^F$. It is clear that for every $x \in \ell_{\infty}^F$, we have 
		\begin{equation}\label{T isomorphism}
		\|x\|_{\infty} \leq \|T(x)\|_{\infty} \leq (1 + \e_1)\|x\|_{\infty}. 
		\end{equation}
		Then, $T$ is an isomorphism from $\ell_{\infty}^F$ onto its image $Z:= 	T(\ell_{\infty}^F)$. We will prove that $Z$ admits an analytic norm close to $\n_{\infty}$. To do so, define $\psi := \p\flag_{U\cap Z}$. Then, $\psi$ is even, convex, and analytic in $U\cap Z$.\smallskip

		We first claim that $B_Z \subseteq U$. Indeed, let $z \in B_Z$ and take $x \in \ell_{\infty}^F$ to be such that $T(x) = z$. Since the range $x(\N)$ of $x$ is finite, there is $n_0 \in \N$ such that $\{x(i): i = 1, \ldots, n_0 \} = x(\N)$. From this, it follows that
\begin{equation*}
|x(i)| \leq \frac{1}{1 + \e_i} \leq \frac{1}{1 + \e_{n_0}}, \ \forall  i \in \N.
\end{equation*}
Thus, for each $i>n_0$, we have 

\begin{equation*}
    |z(i)|=(1+\e_i)|x_i|\leq\frac{1+\e_i}{1+\e_{n_0}}\leq \frac{1+\e_{n_0+1}}{1+\e_{n_0}} <1.
\end{equation*}
This proves that $z \in U$, as desired.\smallskip

Next, with the shorthand notation $\{\psi\leq1\}:=\{z\in U\cap Z\colon \psi(z)\leq1\}$, we claim that 
		\begin{equation} \label{minkow}
		(1-\e_1)B_Z \subseteq \{\psi\leq1\} \subseteq B_Z.
		\end{equation}
		Indeed, if $z \in B$, then $z\in Z$ and $\psi(z) \leq 1$. So, $\sum_{i=1}^{\infty} z(i)^{2i+p} \leq 1$, which implies that $|z(i)| \leq 1$, for every $i\in\N$, and then $\|z\|_{\infty} \leq 1$. This shows that $\{\psi\leq1\} \subseteq B_Z$. On the other hand, if $z\in(1-\e_1) B_Z$, then evidently $z\in U\cap Z$. Moreover, $\|z\|_{\infty} \leq1-\e_1$, whence
		\begin{equation*}
		\psi(z) = \sum_{i=1}^{\infty} z(i)^{2i+p} \leq \sum_{i=1}^{\infty} (1 - \e_1)^{2i+p} \leq 1,	
		\end{equation*}
		if $p$ is large enough (notice that $p$ depends {\it only} on $\e_1$). So, $(1-\e_1)B_Z \subseteq B$ and, therefore, our second claim is also proved.\smallskip
		
		Our last claim is that $\{\psi\leq1\}$ is a closed subset of $Z$. Indeed, since $B_Z\subseteq U$, $\psi$ is in particular continuous on $B_Z$. Thus, by (\ref{minkow}), $\{\psi\leq1\}$ is a closed subset of $B_Z$, whence of $Z$. \smallskip

		To conclude, we can apply Lemma \ref{minkowlemma}, with $D=O:=U\cap Z$ (notice that this is an open subset of $Z$), to get that the Minkowski functional $\mu$ of $\{\psi\leq1\}$ is an equivalent analytic norm on $Z = T(\ell_{\infty}^F)$. Using again (\ref{minkow}), we have that it is close to $\n_{\infty}$. Finally, (\ref{T isomorphism}) allows us to pull back the analytic norm to $\ell_\infty^F$, as desired.
	\end{proof}
	
	By an analogous argument as in Theorem \ref{equiv-analytic-norm-main}, we may prove the following statement. 
	
	\begin{theorem}  Let $K$ be a totally disconnected and separable compact space and let $\mathcal{B}$ be a basis for the topology of $K$, consisting of clopen sets. Then the linear span of the characteristic functions of $\mathcal{B}$ is a dense subspace of $C(K)$ that admits an analytic norm.

	\end{theorem}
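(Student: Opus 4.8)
The plan is to reduce the statement to Theorem~\ref{equiv-analytic-norm-main} by embedding $C(K)$ isometrically into $\ell_\infty$ in such a way that the span $Y$ of the characteristic functions of $\mathcal{B}$ lands inside $\ell_\infty^F$; the analytic norm furnished by Theorem~\ref{equiv-analytic-norm-main} then restricts and pulls back to an analytic norm on $Y$. Two preliminary facts are needed. First, since $K$ is totally disconnected, the finitely-valued (equivalently, clopen simple) functions are $\n_\infty$-dense in $C(K)$. Second, $Y$ is itself dense: this is a standard fact about clopen bases, which I would record by observing that every clopen set $C$ is compact and open, hence a finite union $C=B_1\cup\dots\cup B_k$ of members of $\mathcal{B}$, and then expanding $\chi_C$ by inclusion--exclusion, using that finite intersections of members of $\mathcal{B}$ are again finite unions of members of $\mathcal{B}$. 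This topological ingredient is routine; the essential point is the construction of the norm.

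For the norm, fix a countable dense set $\{t_i\}_{i=1}^\infty\subset K$ (it exists by separability) and define $S\colon C(K)\to\ell_\infty$ by $Sf:=(f(t_i))_{i=1}^\infty$. Because $\{t_i\}$ is dense and the functions are continuous, $\|Sf\|_\infty=\sup_i|f(t_i)|=\|f\|_\infty$, so $S$ is a linear isometry into $\ell_\infty$. The key observation, playing the role of the passage from $\ell_\infty$ to $\ell_\infty^F$ in Theorem~\ref{equiv-analytic-norm-main}, is that every $f\in Y$ is finitely valued: if $f=\sum_i a_i\chi_{B_i}$ is a finite sum, then $f$ takes only finitely many values, whence $Sf\in\ell_\infty^F$. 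Thus $S$ restricts to a linear isometry of $Y$ into $\ell_\infty^F$, and $W:=S(Y)$ is a linear subspace of $\ell_\infty^F$.

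It then remains to transport the analytic norm. By Theorem~\ref{equiv-analytic-norm-main}, $\ell_\infty^F$ carries an analytic norm $\nn\cdot$ with $\nn\cdot\leq\n_\infty\leq(1+\e)\nn\cdot$. I would use the elementary but crucial fact that the restriction of an analytic norm to a linear subspace is again analytic: at a point $a\in W$, precomposing the local expansion $\nn{x}=\sum_n P_n(x-a)$ with the linear continuous inclusion $W\embed\ell_\infty^F$ yields $\sum_n (P_n\flag_W)(x-a)$, a convergent power series of continuous homogeneous polynomials on $W$. Consequently $\nn\cdot\flag_W$ is analytic on $W$, and setting $\|f\|_\ast:=\nn{Sf}$ defines a norm on $Y$ (as $S$ is injective) which, being the composition of an analytic norm with the linear isomorphism $S\colon Y\to W$, is analytic; the displayed inequalities together with the isometry of $S$ give $\|\cdot\|_\ast\leq\n_\infty\leq(1+\e)\|\cdot\|_\ast$ on $Y$, so the new norm even approximates $\n_\infty$.

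The step I expect to be the main obstacle is verifying that analyticity survives the two reductions---restriction to $W$ and pullback along $S$---rather than any single hard estimate; once one is comfortable that composing a convergent power series with a bounded linear map again yields a convergent power series of homogeneous polynomials, so that the defining property of $C^\omega$ is preserved, the argument is a faithful transcription of the proof of Theorem~\ref{equiv-analytic-norm-main}. Alternatively, and in the same spirit, one could bypass the abstract restriction principle and redo the construction directly on $W$, applying Lemma~\ref{minkowlemma} to $\psi:=\varphi\flag_{T(W)}$ for the same function $\varphi(x)=\sum_i x(i)^{2i+p}$ and the same diagonal isomorphism $T$; the only role of $\ell_\infty^F$ is that $W\subset\ell_\infty^F$ consists of finitely-valued sequences, which is exactly what forces the level set $\{\psi\leq 1\}$ into the domain $U$ on which $\varphi$ is analytic.
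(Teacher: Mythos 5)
Your construction of the analytic norm is correct and is, as far as one can tell, exactly the ``analogous argument'' that the paper intends (the paper gives no details beyond pointing to Theorem \ref{equiv-analytic-norm-main}): the evaluation map $Sf=(f(t_i))_{i=1}^\infty$ along a countable dense set is the natural isometric embedding of $C(K)$ into $\ell_\infty$, finitely-valued functions land in $\ell_\infty^F$, and the restriction/pull-back of an analytic norm along a continuous linear injection is analytic for precisely the reason you give. That part of the argument is complete.

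The genuine gap is in the step you dismiss as routine, namely the density of $Y:=\Span\{\chi_B\colon B\in\mathcal{B}\}$ in $C(K)$ --- which is part of the conclusion of the theorem, not a hypothesis. Your inclusion--exclusion argument is circular. Writing a clopen set $C$ as $B_1\cup\dots\cup B_k$ and expanding gives $\chi_C=\sum_i\chi_{B_i}-\sum_{i<j}\chi_{B_i\cap B_j}+\cdots$, and the sets $B_{i_1}\cap\dots\cap B_{i_j}$ are clopen but need not belong to $\mathcal{B}$; writing each such intersection as a finite union of members of $\mathcal{B}$ and expanding again reproduces the same problem, and there is no decreasing parameter to make the recursion terminate (a clopen subset of a clopen set may require \emph{more} members of $\mathcal{B}$ to cover it, and in a space like the Cantor set there is no well-founded rank on clopen sets). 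What your argument actually shows is that the unital \emph{subalgebra} generated by $\{\chi_B\colon B\in\mathcal{B}\}$ is dense: that subalgebra is the span of the characteristic functions of the Boolean algebra generated by $\mathcal{B}$, which is the algebra of all clopen sets, and Stone--Weierstrass applies. The missing point is exactly whether $\chi_{B\cap B'}$ belongs to the \emph{linear span} of $\{\chi_B\colon B\in\mathcal{B}\}$, i.e.\ whether that span is closed under multiplication; for an arbitrary clopen basis this is not obvious and requires a genuine argument.

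There is a painless repair that recovers everything one actually needs: since $K$ is compact and has the countable basis $\mathcal{B}$, every clopen set is a finite union of members of $\mathcal{B}$, so $K$ has only countably many clopen sets; the family of \emph{all} clopen sets is therefore itself a countable clopen basis, and for this basis the span of the characteristic functions is a point-separating unital subalgebra of $C(K)$, hence dense by Stone--Weierstrass. Running your construction with this enlarged basis produces a dense subspace of $C(K)$ (containing $Y$) with an analytic norm. This proves the theorem with ``some countable clopen basis'' in place of ``an arbitrary one''; to obtain the statement literally as written, the density of $Y$ for an arbitrary basis $\mathcal{B}$ still has to be established.
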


	The next result is an extension of \cite[Corollary 4]{H}. 
	
	
	\begin{theorem} \label{hamel} Every normed space with a countable Hamel basis admits an analytic norm that approximates the original norm of the space.
	\end{theorem}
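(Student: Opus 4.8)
The plan is to reduce the statement to Theorem~\ref{equiv-analytic-norm-main} by embedding $X$ almost isometrically into $\ell_\infty^F$. Concretely, I would first show that for every $\varepsilon>0$ there is a linear injection $J\colon X\to\ell_\infty^F$ with $\|x\|\le\|Jx\|_\infty\le(1+\varepsilon)\|x\|$ for all $x\in X$. Granting such a $J$, let $N$ be the analytic norm on $\ell_\infty^F$ produced by Theorem~\ref{equiv-analytic-norm-main}, which approximates $\n_\infty$; then $x\mapsto N(Jx)$ is a norm on $X$ (here the injectivity of $J$ is used), it is equivalent to and, after composing the approximation constants, as close as we wish to $\|\cdot\|$, and it is analytic on $X\setminus\{0\}$ because the analytic function $N$ is precomposed with the continuous linear map $J$, which sends $X\setminus\{0\}$ into $\ell_\infty^F\setminus\{0\}$. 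The essential point in this reduction is that the image must genuinely lie in $\ell_\infty^F$: only the finitely-valued vectors fall, after the twist $T$ of Theorem~\ref{equiv-analytic-norm-main}, into the domain on which the generating function is analytic.

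To build $J$ I would argue inductively along the finite-dimensional subspaces $X_n:=\Span(u_1,\dots,u_n)$, where $(u_k)_k$ is the given Hamel basis, so that $X=\bigcup_n X_n$. At stage $n$, using compactness of the unit sphere of the finite-dimensional space $X_n$, I would choose a finite family of norm-one functionals that $(1+\varepsilon)$-norms $X_n$, realised as Hahn--Banach extensions of a fine net on $S_{X_n^*}$; assembling all these families into consecutive blocks of coordinates yields a map into $\ell_\infty$ that is $(1+\varepsilon)$-isometric on each $X_n$, hence on $X$. To force the images into $\ell_\infty^F$, I would additionally quantise the chosen functionals, replacing their values on the fixed basis vectors $u_1,\dots,u_n$ by the nearest points of a common finite grid, which keeps each $Ju_k$ finitely valued while only perturbing the norming constant.

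The main obstacle is exactly this last requirement: guaranteeing that $J(X)\subseteq\ell_\infty^F$ while $J$ stays uniformly almost isometric. It cannot be met by making the images finitely supported, since then the completion of $X$ would embed into $c_0$, which is impossible for instance when $\overline{X}=\ell_1$; the images are therefore forced to be finitely valued with infinite support, exactly as in the isometric embedding of (the finitely supported vectors of) $\ell_1$ into $\ell_\infty^F$ furnished by the $\pm1$ sign patterns $x\mapsto(\langle x,\sigma\rangle)_{\sigma\in\{\pm1\}^{\N}}$. Reconciling a finite value-set on each $u_k$ with the lower norming bound for a fixed target distortion is the delicate heart of the construction: the tension is that finer norming asks for a finer grid, while finitely many values asks for a bounded grid. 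I expect to resolve it by fixing the distortion $\varepsilon$ once and for all, so that a single grid mesh suffices at every stage, and by exploiting the finite-dimensional compactness at stage $n$ together with the scaling $T$ of Theorem~\ref{equiv-analytic-norm-main}, which ensures $\limsup_i|Jx(i)|<\|Jx\|_\infty$ and thereby places the image inside the domain of analyticity. Once $J$ is in hand, the theorem follows by pulling back the analytic norm of $\ell_\infty^F$ as in the reduction above.
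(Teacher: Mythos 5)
Your overall reduction is the same as the paper's: construct a linear injection $J\colon X\to\ell_\infty^F$ with $\|x\|\leq\|Jx\|_\infty\leq(1+\e)\|x\|$ and pull back the analytic norm of Theorem~\ref{equiv-analytic-norm-main} (using that $\ell_\infty^F$ is a \emph{linear} subspace of $\ell_\infty$, so it suffices that each $Ju_k$ be finitely valued). The gap is in the construction of $J$. Your quantisation scheme requires, for each fixed $k$, that the values $\tilde f(u_k)$ over \emph{all} functionals $f$ in \emph{all} blocks lie in one finite set, which forces a single grid mesh $\delta$ across all stages. But rounding the values of $f$ at $u_1,\dots,u_n$ to a $\delta$-grid perturbs $f(x)$, for $x=\sum_{i=1}^n\alpha_iu_i$, by up to $\delta\sum_{i=1}^n|\alpha_i|$, and for a general Hamel basis the quantity $\sum|\alpha_i|$ is \emph{not} uniformly bounded over norm-one vectors — indeed even a single coefficient functional $u_i^*$ can be unbounded on $X$ (take $X=c_{00}\subseteq c_0$ with $u_1=e_1$ and $u_n=e_1+e_n/n$: then $nu_n-nu_1$ has norm $1$ and coefficients $\pm n$). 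So a fixed mesh does not preserve the norming constant, while stage-dependent meshes $\delta_n\searrow0$ make the value set of $Ju_k$ infinite. Your proposed resolution (``a single grid mesh suffices at every stage'') therefore does not close the gap; the tension you correctly identify is real and is not resolved by fixing $\e$.

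The missing ingredient is a quantitative bound on the coefficients, and this is exactly what the paper's proof spends its first half on: it replaces the given Hamel basis by a biorthogonal ``flag'' system $\{e_n;e_n^*\}$ with the same span, $\|e_n\|=\|e_n^*\|=1$ and $e_k\in\bigcap_{i<k}\ker e_i^*$, which yields $|\alpha_i|\leq 2^{i-1}$ whenever $\|\sum\alpha_ie_i\|=1$. With that bound in hand, the paper perturbs the \emph{vectors} rather than the functionals: it embeds $X$ isometrically into $\ell_\infty$ (standard for separable spaces) and replaces each $e_i$ by a finitely-valued $x_i$ with $\|x_i-e_i\|_\infty<\e/4^i$, so that the total perturbation on the sphere is at most $\sum_i 2^{i-1}\e/4^i<\e$; each $x_i$ may use its own finite value set, since finite linear combinations of finitely-valued sequences are automatically finitely valued, and no common grid is needed. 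If you want to salvage your functional-quantisation scheme, you would first need this (or an M-basis, as in the paper's remark) to control $\sum|\alpha_i|$, and then let the rounding precision at $u_k$ depend on $k$ alone (not on the block), decaying fast enough against $2^{k-1}$ — at which point you have essentially reproduced the paper's argument in dual form.
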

	
	\begin{proof} Let $X$ be a normed space with a countable Hamel basis $(\widetilde{e}_n)_{n=1}^{\infty}$. We may suppose without loss of generality that $(\widetilde{e}_n)_{n=1}^{\infty} \subset S_X$. Let us define the following system. Set $e_1 := \widetilde{e}_1$. Using the Hahn-Banach theorem, pick $e_1^* \in X^*$ so that $\langle e_1^*,e_1\rangle = 1$ and $\|e_1^*\| = 1$. Pick now $e_2 \in S_X$ to be such that $\Span\{e_1, e_2\} = \Span \{ \widetilde{e}_1, \widetilde{e}_2\}$ and $\langle e_1^*,e_2\rangle = 0$. Again by the Hahn-Banach theorem, pick $e_2^* \in X^*$ to be such that $\langle e_2^*,e_2\rangle=1$ and $\|e_2^*\| = 1$. By induction, there is $\{e_n; e_n^*\}_{n=1}^\infty \subset S_X \times S_{X^*}$ so that, for all $k\in\N$, 
		\begin{equation} \label{system}
		e_k \in \bigcap_{i < k} \ker e_i^*, \ \ \langle e_k^*,e_k\rangle=1, \ \ \mbox{and} \ \ \ 
		\Span\{e_n: n \in \N\} = \Span \{\widetilde{e}_n: n \in \N \}.
		\end{equation}

		Let us prove now that if $y = \sum_{i=1}^N \alpha_i e_i$ with $\|y\| = 1$ is an element of the linear span of $\{e_n: n \in \N\}$, then $|\alpha_i| \leq 2^{i-1}$ for each $i = 1, \ldots, N$. Indeed, we use induction and (\ref{system}). Note first that
		\begin{equation*}
		1 = \|y\| \geq |\langle e_1^*,y\rangle| = |\alpha_1|.
		\end{equation*}
		Now suppose that $|\alpha_i| \leq 2^{i-1}$ for every $i = 1, \ldots, N-1$. Then, since 
		\begin{equation*}
		1 = \|y\| \geq \left\langle e_N^*, \sum_{i=1}^N \alpha_i e_i\right\rangle = |\alpha_1\langle e_N^*,e_1\rangle + \alpha_2\langle e_N^*,e_2\rangle + \ldots + \alpha_N|,
		\end{equation*}
		we can apply the reverse triangle inequality to get
		\begin{equation*}
		|\alpha_N| \leq 1 + |\alpha_1| + |\alpha_2| + \ldots + |\alpha_{N-1}| \leq 1 + 1 + 2 + \ldots + 2^{N-2} = 2^{N-1}.
		\end{equation*}

		Since $X$ is separable, we can consider $X$ as a subspace of $\ell_\infty$. Fix $\e \in (0,1)$; by the fact that $\ell_{\infty}^F$ is dense in $\ell_{\infty}$, we can find, for each $n \in \N$, an element $x_n \in \ell_{\infty}^F$ such that 
		\begin{equation*} 
		\|x_n - e_n\|_{\infty} < \frac{\e}{4^n}.
		\end{equation*}
		We will prove that there is an isomorphism from $\Span \{e_n: n \in \N\}$ onto $\Span \{x_n: n \in \N\}$ by using the mapping $T: e_i \mapsto x_i$. Indeed, let $x = \sum_{i=1}^N \alpha_i e_i$. So, $T(x) = \sum_{i=1}^N \alpha_i x_i$. If $\|x\| = 1$, we have that 
		\begin{eqnarray*}
			\left| \|T(x)\| - \|x\| \right| &=& \left| \left\| \sum_{i=1}^N \alpha_i x_i \right\| - \left\| \sum_{i=1}^N \alpha_i e_i \right\| \right| \\ 
			&\leq& \left\| \sum_{i=1}^N \alpha_i (x_i - e_i) \right\| \\
			&\leq& \sum_{i=1}^N 2^{i-1} \|x_i - e_i\|_{\infty} \\
			&\leq& \sum_{i=1}^{\infty} \frac{\e}{2^{i+1}} < \e= \e \cdot\|x\|.
		\end{eqnarray*}
Consequently, for every $x \in \Span \{e_n: n \in \N\}$, we have that
\begin{equation*}
(1 - \e) \|x\| \leq \|T(x)\| \leq (1 + \e)\|x\|.
\end{equation*}
Therefore, $T$ is an isomorphism as desired. By Theorem \ref{equiv-analytic-norm-main}, $\ell_{\infty}^F$ admits an analytic equivalent norm which is close to $\n_\infty$ and then so does $\Span \{e_n: n \in \N\} = X$.
	\end{proof}
	
	\begin{remark} Let us mention that a somewhat shorter proof of the above theorem is possible, by selecting a better system of coordinates in $X$. More precisely, the classical Marku\v{s}evi\v{c}'s theorem \cite{Markushevich} (see, \emph{e.g.}, \cite[Lemma 1.21]{HMVZ}) yields us an M-basis $\{e_n;e^*_n\}_{n=1}^\infty$ such that $X=\Span\{e_n\}_{n=1}^\infty$. It is then sufficient to find vectors $x_n\in\ell_\infty^F$ such that $$\|e_n-x_n\|_\infty \cdot\|e^*_n\|\leq \e\cdot2^{-n}$$
		and conclude as in the above proof. We decided to present the above argument, as it is self-contained and we do not really need the existence of an M-basis in the argument.
	\end{remark}
	
	We have the following immediate consequence of Theorem \ref{hamel}.
	\begin{corollary} Let $X$ be a separable Banach space. Then there is a dense subspace $Y$ of $X$ which admits an analytic norm that approximates the original norm of $Y$.
	\end{corollary}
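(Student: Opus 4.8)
The plan is to reduce the statement directly to Theorem \ref{hamel} by exhibiting a dense subspace of $X$ that carries a countable Hamel basis. Since $X$ is separable, I would begin by fixing a countable dense subset $\{d_n : n \in \N\}$ of $X$ and setting $Y := \Span\{d_n : n \in \N\}$, equipped with the norm $\n$ inherited from $X$. The subspace $Y$ is then automatically dense in $X$, as it already contains a dense set, and the whole point of the corollary is to find a suitable subspace on which Theorem \ref{hamel} can be brought to bear.

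The key observation is that $Y$ admits a countable Hamel basis. This follows from a standard greedy selection: enumerating $d_1, d_2, \ldots$, I would retain $d_n$ precisely when it does not lie in the linear span of the previously retained vectors, and discard it otherwise. The retained vectors are linearly independent by construction, their span is all of $Y$, and they form a countable (finite in the degenerate case $\dim X < \infty$) family. Hence $(Y, \n)$ is a normed space with a countable Hamel basis.

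With this in hand, the conclusion is immediate: applying Theorem \ref{hamel} to the normed space $(Y, \n)$ produces an equivalent analytic norm on $Y$ that approximates the original (inherited) norm of $Y$, which is exactly the assertion of the corollary. Because the construction is entirely elementary, I do not expect any genuine obstacle here; the single step deserving a line of justification is the passage from a countable spanning set to a countable Hamel basis, and even that is routine.
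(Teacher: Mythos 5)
Your argument is correct and is precisely the one the paper intends: the corollary is stated there as an immediate consequence of Theorem \ref{hamel}, obtained by taking the linear span of a countable dense subset of $X$ and observing that it has a countable Hamel basis. No issues.
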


	As we pointed out in the introduction of this paper, A. Guirao, V. Montesinos, and V. Zizler asked in their recent monograph \cite{GMZ} the following question: if $F$ is the normed space of all finitely supported vectors in $\ell_1(\Gamma)$, with $\Gamma$ uncountable, endowed with the $\ell_1$-norm, then does $F$ admit a Fr\'echet smooth norm? (see \cite[Problem 149]{GMZ}). In the next results, we are dealing with this problem and we are using Theorem \ref{equiv-analytic-norm-main} as a tool to prove them.
	
	\begin{theorem} \label{l1} The normed space $(F,\n_1)$ of all finitely supported vectors in $\ell_1(\mathfrak{c})$ admits an analytic norm that approximates $\n_1$.
	\end{theorem}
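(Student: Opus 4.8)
The plan is to realise $(F,\n_1)$ as a subspace of $\ell_\infty^F$ by means of a \emph{linear isometry}, and then to transport the analytic norm furnished by Theorem \ref{equiv-analytic-norm-main}. The point is that, although $\ell_1(\mathfrak c)$ itself is far too large to be embedded in $\ell_\infty$, its finitely supported part $F$ has density character only $\mathfrak c$, which matches that of $\ell_\infty^F$; the bridge between the $\ell_1$-geometry of $F$ and the $\ell_\infty$-geometry of $\ell_\infty^F$ will be supplied by an independent family of subsets of $\N$ of cardinality $\mathfrak c$, whose existence is classical (Fichtenholz--Kantorovich; see, e.g., \cite{K}).

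First I would fix such an independent family $\{A_\gamma\}_{\gamma\in\mathfrak c}$, recalling that independence means $\bigcap_{\gamma\in I}A_\gamma\cap\bigcap_{\gamma\in J}(\N\setminus A_\gamma)\neq\emptyset$ for all disjoint finite $I,J\subseteq\mathfrak c$. To each index I associate $f_\gamma:=2\chi_{A_\gamma}-\mathbf 1\in\ell_\infty^F$, which takes only the two values $\pm 1$; in particular $\|f_\gamma\|_\infty=1$ and indeed $f_\gamma\in\ell_\infty^F$. Writing $\{e_\gamma\}_{\gamma\in\mathfrak c}$ for the canonical vectors of $F$, I then consider the linear map $T\colon F\to\ell_\infty^F$ determined by $Te_\gamma:=f_\gamma$.

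The key step is to check that $T$ is an isometry from $(F,\n_1)$ into $(\ell_\infty^F,\n_\infty)$. For $x=\sum_{j=1}^n\alpha_j e_{\gamma_j}$ with all $\alpha_j\neq 0$ and the $\gamma_j$ distinct, the upper estimate $\|Tx\|_\infty\le\sum_j|\alpha_j|=\|x\|_1$ is immediate from $|f_{\gamma_j}(k)|=1$. For the reverse inequality I would apply independence with $I:=\{\gamma_j:\alpha_j>0\}$ and $J:=\{\gamma_j:\alpha_j<0\}$: this yields a coordinate $k\in\N$ at which $f_{\gamma_j}(k)=\operatorname{sign}(\alpha_j)$ simultaneously for all $j$, whence $(Tx)(k)=\sum_j\alpha_j\operatorname{sign}(\alpha_j)=\|x\|_1$ and therefore $\|Tx\|_\infty\ge\|x\|_1$. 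Thus $T$ is a linear isometry onto its image $W:=\Span\{f_\gamma:\gamma\in\mathfrak c\}\subseteq\ell_\infty^F$. To conclude, let $\nn\cdot$ be the analytic norm on $\ell_\infty^F$ approximating $\n_\infty$ provided by Theorem \ref{equiv-analytic-norm-main}, and set $\abs x:=\nn{Tx}$ for $x\in F$. Since $T$ is injective and $\nn\cdot$ is a norm, $\abs{\cdot}$ is a norm; being the composition of the analytic function $\nn\cdot$ with the continuous linear (hence analytic) map $T$, it is analytic on $F\setminus\{0\}$. Finally, from $\nn\cdot\le\n_\infty\le(1+\e)\nn\cdot$ and $\|Tx\|_\infty=\|x\|_1$ we get $\abs x\le\|x\|_1\le(1+\e)\abs x$, so $\abs{\cdot}$ approximates $\n_1$.

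I expect the only genuinely delicate point to be the isometry statement in the third step: the crux is recognising that an independent family of the correct cardinality $\mathfrak c$ is exactly the combinatorial device that converts the $\ell_1$-norm on $F$ into the supremum norm on a subspace of $\ell_\infty^F$ (and that $\mathfrak c$ is the largest cardinal for which this can work, since $\ell_\infty^F\subseteq\ell_\infty$ has density character $\mathfrak c$). Once the isometric embedding is in place, the remainder is a routine transport-of-structure argument relying on Theorem \ref{equiv-analytic-norm-main}.
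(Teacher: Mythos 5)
Your proof is correct, and it follows the same overall strategy as the paper's --- embed $F$ into $\ell_\infty^F$ and pull back the analytic norm of Theorem \ref{equiv-analytic-norm-main} --- but the embedding is produced by a genuinely different device. The paper first embeds $\ell_1(\mathfrak c)$ isometrically into $\ell_\infty$ abstractly (via $C[0,1]^*$); since the images of the canonical basis vectors need not be finitely valued, it then perturbs each of them by less than $\e$ into $\ell_\infty^F$ and checks, by a summability estimate, that the span of the perturbed vectors is still $(1+\e)$-isomorphic to $F$. You instead invoke a Fichtenholz--Kantorovich independent family of $\mathfrak c$ subsets of $\N$ to write down explicit $\pm1$-valued vectors $f_\gamma\in\ell_\infty^F$ for which $e_\gamma\mapsto f_\gamma$ is an exact isometry; your verification of the lower estimate via the coordinate supplied by independence is correct, and so is the transport of structure (analyticity of $x\mapsto\nn{Tx}$ on $F\setminus\{0\}$ holds because $T$ is continuous, linear, and injective, so it maps $F\setminus\{0\}$ into the set where the norm of Theorem \ref{equiv-analytic-norm-main} is analytic). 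Your route is self-contained and slightly sharper: it lands directly in $\ell_\infty^F$, avoids the perturbation step, and yields an isometric rather than merely $(1+\e)$-isomorphic copy of $F$, at the cost of invoking the combinatorial existence of an independent family of cardinality $\mathfrak c$ --- which is, in essence, the standard concrete proof that $\ell_1(\mathfrak c)$ embeds isometrically into $\ell_\infty$, so the two arguments are close relatives.
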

	
	\begin{proof} It is a standard fact that $\ell_1(\mathfrak{c})$ is isometric to a subspace of $\ell_{\infty}$ (see, for example, \cite[Exercise 3.143, p. 169]{FHHMZ}). Consider $(e_{\lambda})_{\lambda<\mathfrak{c}}$ the canonical basis of $\ell_1(\mathfrak{c})$. Pick $(x_{\lambda})_{\lambda<\mathfrak{c}} \subseteq \ell_{\infty}^F$ to be such that $\|x_{\lambda} - e_{\lambda}\|_{\infty} < \e$ for each $\lambda$, where $\e \in (0, 1)$.
	
    An argument as in the proof of Theorem \ref{hamel} shows that the linear spans of $(e_{\lambda})_{\lambda<\mathfrak{c}}$ and $(x_{\lambda})_{\lambda<\mathfrak{c}}$ are isomorphic via the mapping $(e_{\lambda})_{\lambda<\mathfrak{c}} \mapsto (x_{\lambda})_{\lambda<\mathfrak{c}}$ (also see, \emph{e.g.}, \cite[p. 331]{Jameson}). It then follows from Theorem \ref{equiv-analytic-norm-main} that $F=\Span \{e_\lambda\}_{\lambda<\mathfrak{c}}$ admits an analytic  norm.
	\end{proof}
	
	We have the following consequence of Theorem \ref{l1}, which proves Theorem A(iii).
	
	\begin{corollary}\label{Cor: analytic in dense of l1c} Let $S$ be a set with $|S|\leq\mathfrak{c}$. Then, every dense subspace of $\ell_1(S)$ contains a further dense subspace which admits an analytic norm.
	\end{corollary}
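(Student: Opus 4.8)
The plan is to realize the required subspace as a small linear perturbation of the canonical finitely supported vectors, transported \emph{inside} the prescribed dense subspace, and to import the analytic norm from Theorem~\ref{l1} through an isomorphism. The only point that needs real care is verifying density of the resulting span.

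First I would reduce to the case $S \subseteq \mathfrak{c}$, so that $\ell_1(S)$ sits isometrically inside $\ell_1(\mathfrak{c})$ and the space $F_S := \Span\{e_\lambda : \lambda \in S\}$ of finitely supported vectors of $\ell_1(S)$ is a linear subspace of the space $F$ appearing in Theorem~\ref{l1}. Restricting the analytic norm furnished by Theorem~\ref{l1} to $F_S$ yields an equivalent analytic norm on $F_S$: the restriction of a norm is a norm, and the restriction of an analytic function to a linear subspace is analytic, since the local power series expansions simply precompose with the (linear) inclusion. Thus $F_S$ already carries an analytic norm, and the task becomes moving an isomorphic copy of $F_S$ inside the given dense subspace $D$.

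Let $D$ be a dense subspace of $\ell_1(S)$ and fix $\e \in (0,1)$. Using the density of $D$, for each $\lambda \in S$ I would choose $y_\lambda \in D$ with $\|y_\lambda - e_\lambda\|_1 < \e$, and define $T \colon F_S \to \ell_1(S)$ as the linear extension of $e_\lambda \mapsto y_\lambda$. Writing $T = I + K$ on $F_S$, where $K\big(\sum_\lambda d_\lambda e_\lambda\big) = \sum_\lambda d_\lambda (y_\lambda - e_\lambda)$, the triangle inequality together with $\big\|\sum_\lambda d_\lambda e_\lambda\big\|_1 = \sum_\lambda |d_\lambda|$ gives $\|Kv\|_1 \leq \e\,\|v\|_1$. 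Hence $T$ is a $(1\pm\e)$-isomorphism onto its image $Y := \Span\{y_\lambda : \lambda \in S\}$; in particular the $y_\lambda$ are automatically linearly independent and $T$ is injective. Transporting the analytic norm of $F_S$ through the bounded linear isomorphism $T^{-1}\colon Y \to F_S$ endows $Y$ with an analytic norm, because the composition of an analytic norm with a bounded linear bijection is again an analytic norm. Since each $y_\lambda \in D$ and $D$ is a linear subspace, $Y \subseteq D$.

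It remains to check that $Y$ is dense in $\ell_1(S)$, which I expect to be the main obstacle: closeness of the generating vectors $y_\lambda$ to the basis $e_\lambda$ makes $Y$ intuitively ``close to'' the dense set $F_S$, but closeness of generators does not by itself force density of their span. The clean way around this is to observe that $K$ extends by continuity to a bounded operator on all of $\ell_1(S)$ with norm $\leq \e < 1$, so that $\overline{T} := I + K$ is invertible on $\ell_1(S)$ via the Neumann series and is therefore a linear homeomorphism of $\ell_1(S)$ onto itself. As $\overline{T}$ restricts to $T$ on $F_S$ and $F_S$ is dense in $\ell_1(S)$, the image $Y = \overline{T}(F_S)$ is dense in $\overline{T}(\ell_1(S)) = \ell_1(S)$. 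Consequently $Y$ is a dense subspace of $\ell_1(S)$, contained in $D$, admitting an analytic norm, which is exactly the assertion. (When $S$ is finite the argument degenerates harmlessly, $F_S = \ell_1(S)$ being finite dimensional.)
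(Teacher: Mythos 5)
Your proof is correct and reaches the conclusion by the same two-step strategy as the paper---embed a dense isomorphic copy of $F_S=\Span\{e_\lambda\}_{\lambda\in S}$ into the given dense subspace $D$ and pull back the analytic norm of Theorem~\ref{l1}---but where the paper delegates the embedding step entirely to the citation \cite[Theorem 2.3]{HR}, you prove it directly. Your perturbation argument is sound: for finitely supported $v=\sum_\lambda d_\lambda e_\lambda$ the identity $\|v\|_1=\sum_\lambda|d_\lambda|$ gives $\|Kv\|_1\leq\e\|v\|_1$, so $T=I+K$ is a $(1\pm\e)$-isomorphism of $F_S$ onto $Y=\Span\{y_\lambda\}\subseteq D$; and you correctly identify the density of $Y$ as the one genuine obstacle and resolve it by extending $K$ to a bounded operator on all of $\ell_1(S)$ with norm at most $\e<1$ and inverting $I+K$ by the Neumann series, so that $Y$ is the image of the dense set $F_S$ under a surjective linear homeomorphism of $\ell_1(S)$. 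The auxiliary points (restricting the analytic norm of Theorem~\ref{l1} to the linear subspace $F_S$, and transporting it through the bounded linear bijection $T^{-1}$) are also handled correctly. The trade-off is the expected one: your route is self-contained and elementary, at the cost of an argument that the paper compresses into a reference, while the cited result of \cite{HR} is a general statement about dense subspaces of $\ell_1(S)$ whose content is, in substance, exactly the small-perturbation-plus-Neumann-series idea you carried out.
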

	
	\begin{proof} Let $(e_{\lambda})_{\lambda \in S}$ be the canonical basis of $\ell_1(S)$. It is easy to see, perturbing $(e_{\lambda})_{\lambda \in S}$ as in Theorem \ref{l1}, that every dense subspace of $\ell_1(S)$ contains a subspace isomorphic to $\Span\{e_{\lambda} \}_{\lambda \in S}$ (this well-known fact is recorded, \emph{e.g.}, in \cite[Theorem 2.3]{HR}). By using Theorem \ref{l1}, every dense subspace of $\ell_1(S)$ contains a further dense subspace which admits an analytic norm.
	\end{proof} 
	
In the next theorem we shall show that the corresponding result for $c_0(\omega_1)$ fails. Before we formulate the result, we shall record a couple of known results that we require in the proof. First, we need the following known lemma, a simple consequence of the fact that polynomials on $c_0(\Gamma)$ are weakly sequentially continuous. (Indeed, more generally, real-valued Fr\'echet differentiable functions on $c_0(\Gamma)$ with locally uniformly continuous derivatives are weakly sequentially continuous, \cite[Corollary 8]{H1}.) We present a proof for the sake of completeness; in the argument, we denote by $\mathcal{L}_s (^n X)$ the space of all symmetric continuous $n$-linear forms on the normed space $X$.

\begin{lemma} \label{symmetric} Let $\Gamma$ be any set and $M \in \mathcal{L}_s (^n c_0(\Gamma))$. Then, $M$ is countably supported. More precisely, there exists a countable set $A \subseteq \Gamma$ such that 
\begin{equation*}
M(e_{\gamma_1}, \ldots, e_{\gamma_n}) = 0 \ \mbox{for all} \ \{\gamma_1, \ldots, \gamma_n \} \not\subseteq A.	
\end{equation*}	
\end{lemma}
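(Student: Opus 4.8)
The statement asks to show that a weakly sequentially continuous symmetric $n$-linear form $M$ on $c_0(\Gamma)$ is countably supported, meaning it vanishes on all basis tuples outside some countable set $A$. My plan is to argue by contradiction: if no countable support exists, I would extract an uncountable family of coordinates on which $M$ is ``active'', then build from these a weakly null sequence on which the diagonal values of $M$ do not tend to zero, contradicting weak sequential continuity.

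\textbf{Locating the support.} First I would make precise what it means for a coordinate $\gamma$ to be ``seen'' by $M$. Let me call $\gamma \in \Gamma$ \emph{relevant} if there exist $\gamma_2, \ldots, \gamma_n \in \Gamma$ with $M(e_\gamma, e_{\gamma_2}, \ldots, e_{\gamma_n}) \neq 0$. The natural candidate for $A$ is the set of relevant coordinates, so the whole game is to show that this set is countable. Suppose instead it is uncountable. Here I expect to use the uncountable cofinality of $\Gamma$: one should be able to arrange an uncountable subset $\{\gamma_\alpha\}_{\alpha < \omega_1}$ of relevant coordinates together with, for each $\alpha$, a witnessing tuple, in such a way that these witnessing tuples can be organized coherently (for instance, so that the coordinates involved form an increasing transfinite sequence, using the cofinality hypothesis to keep predecessors under control). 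The cleanest route is likely to induct up to $\omega_1$, at each stage choosing a relevant coordinate whose entire witnessing tuple avoids the countably many coordinates chosen so far; uncountable cofinality guarantees that the previously used coordinates do not exhaust $\Gamma$ or form a cofinal set that blocks the next choice.

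\textbf{Producing a weakly null sequence.} With such an uncountable coherent family in hand, I would build finite combinations $u_k = \sum_{\gamma} c_\gamma e_\gamma$ supported on blocks of the selected coordinates so that $M(u_k, \ldots, u_k)$ (the diagonal value) stays bounded away from $0$, while simultaneously arranging that the sequence $(u_k)_k$ is weakly null in $c_0(\Gamma)$. Weak nullity in $c_0(\Gamma)$ is easy to control since it amounts to coordinatewise convergence to zero for a bounded sequence, so I would take the $u_k$ to have pairwise disjoint supports and uniformly bounded norms. The symmetry of $M$ and the multilinear expansion of $M(u_k, \ldots, u_k)$ into the chosen monomials is where the combinatorial bookkeeping lives: I must ensure that the ``diagonal'' contributions reinforce rather than cancel, which one can force by choosing the coefficients $c_\gamma$ and the sign/scaling appropriately (an averaging or pigeonhole argument over the uncountably many blocks lets one refine to a subsequence where the relevant $M$-values have a common sign and a uniform lower bound). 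Then weak sequential continuity of $M$ forces $M(u_k, \ldots, u_k) \to M(0, \ldots, 0) = 0$, the desired contradiction.

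\textbf{Main obstacle.} The technical heart, and the step I expect to be hardest, is the passage from ``uncountably many relevant coordinates'' to ``a single weakly null sequence with non-vanishing diagonal values.'' A relevant coordinate $\gamma$ only guarantees one nonzero mixed value $M(e_\gamma, e_{\gamma_2}, \ldots, e_{\gamma_n})$, not a nonzero \emph{diagonal} value $M(e_\gamma, \ldots, e_\gamma)$; bridging this gap requires a polarization-type argument, expressing diagonal values as a signed average of mixed values, together with a uniform quantitative lower bound extracted via pigeonhole from the uncountable family. Controlling the cancellation in the full multilinear expansion while keeping the supports disjoint (hence the sequence weakly null) is precisely where the uncountable cofinality hypothesis must be deployed carefully, and getting a \emph{uniform} lower bound on infinitely many blocks — rather than merely non-vanishing on each — is the crux that makes the contradiction with weak sequential continuity go through.
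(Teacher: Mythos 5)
There is a genuine gap at the very first step of your extraction: you cannot, in general, choose the witnessing tuples to be pairwise disjoint. Your induction asks at each stage for a relevant coordinate \emph{whose entire witnessing tuple} avoids the countably many coordinates already used, but a relevant coordinate may have no such tuple. Concretely, suppose $M(e_\gamma, e_{\gamma_0}, \ldots, e_{\gamma_0}) \neq 0$ for uncountably many $\gamma$ and a single fixed $\gamma_0$, with all values not involving $\gamma_0$ equal to zero. Then every witnessing tuple contains $\gamma_0$, your disjointness requirement fails after the first step, and any vectors $u_k$ assembled from these tuples must all charge the coordinate $\gamma_0$ --- so they cannot be disjointly supported, hence not weakly null, and the contradiction with weak sequential continuity of the diagonal never materializes. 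This is not a pathological case you can wave away; it is exactly the shape of the general situation, and the obstacle you flag at the end (passing from mixed to diagonal values with a uniform bound) is downstream of this more basic problem.

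The paper's proof handles this by weakening the inductive requirement and then repairing the overlap combinatorially: one only demands that each new $n$-set $F_\alpha$ contain at least one coordinate not in $\bigcup_{\xi<\alpha} F_\xi$ (always possible under the contradiction hypothesis), extracts a uniform lower bound $|M(F_\alpha)| \geq \e$ by pigeonhole over $\omega_1$, and then applies the $\Delta$-system lemma to pass to an uncountable subfamily whose members pairwise intersect in a fixed root $\Delta$. The key difference from your plan is that weak sequential continuity is then applied \emph{directly to the $n$-linear form}, to the sequence of tuples $\bigl(e_{\gamma_1^{\alpha_j}}, \ldots, e_{\gamma_n^{\alpha_j}}\bigr)$ in which the root entries are constant and the tail entries are disjointly supported basis vectors (hence weakly null); multilinearity gives the limit $M(e_{\gamma_1^0}, \ldots, e_{\gamma_k^0}, 0, \ldots, 0) = 0$, contradicting the uniform bound. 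No reduction to diagonal values, no polarization, and no sign/cancellation bookkeeping is needed. If you want to salvage your outline, replacing ``pairwise disjoint witnessing tuples'' by ``a $\Delta$-system of witnessing tuples'' and dropping the detour through the diagonal is the way to do it.
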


\begin{proof} The result is trivially true when $\Gamma$ is countable; thus, we assume $|\Gamma|\geq\omega_1$. By a result due to Pe\l{}czy\'nski and Bogdanowicz (see, \emph{e.g.}, \cite[Chapter 3, Corollary 59]{HJ}), the polynomial associated to $M$ is weakly sequentially continuous; thus, by the Polarisation Formula (see, for example, \cite[Chapter 1, Proposition 11]{HJ}), the same is true for $M$.

Suppose now that the conclusion of the result is false. Then, for each countable set $A \subseteq \Gamma$, there exists a finite set $\{ \gamma_1, \ldots, \gamma_n \} \not\subseteq A$ such that $M(e_{\gamma_1}, \ldots, e_{\gamma_n}) \not= 0$. Since $M$ is symmetric, by a slight abuse of notation, let us denote $M(e_{\gamma_1}, \ldots, e_{\gamma_n})$ by $M(F)$, where $F := \{\gamma_1, \ldots, \gamma_n\} \subseteq \Gamma$. In other words, for every countable set $A \subseteq \Gamma$, there exists $F \not\subseteq A$ with $|F| = n$ such that $M(F) \neq 0$. In particular, we may fix $F_0\subseteq\Gamma$ with $M(F_0) \neq 0$ and $|F_0|=n$. Then, our assumption allows us to choose $F_1 \subseteq \Gamma$ with $F_1 \not\subseteq F_0$ and $|F_1| = n$ such that $M(F_1) \not= 0$. Continuing by transfinite induction, we find $(F_{\alpha})_{\alpha < \omega_1}$ with $F_{\alpha} \not\subseteq \bigcup_{\xi < \alpha} F_{\xi}$ and $|F_{\alpha}| = n$ such that $M(F_{\alpha}) \not= 0$. Notice that $F_\alpha\neq F_\beta$ for distinct $\alpha,\beta<\omega_1$.\smallskip

By the uncountable cofinality of $\omega_1$, we may assume, without loss of generality, that
\begin{equation}\label{c0_11}
	|M(F_{\alpha})| \geq \e, \ \ \forall \alpha < \omega_1 
\end{equation}
for some $\e > 0$, where $F_{\alpha} := \{\gamma_1^{\alpha}, \ldots, \gamma_n^{\alpha} \}$. By the $\Delta$-system Lemma (see, for example, \cite[Lemma III.2.6]{K}), we may assume that there exists $\Delta$ such that $F_{\alpha} \cap F_{\beta} = \Delta$ for distinct $\alpha,\beta<\omega_1$. In other words, we can write
\begin{equation*}
F_{\alpha} = \Delta \cup \{ \gamma_{k+1}^{\alpha}, \ldots, \gamma_{n}^{\alpha} \},	
\end{equation*}
where $\Delta = \{ \gamma_1^{\alpha}, \ldots, \gamma_k^{\alpha} \}$ for some $k \leq n$ such that $\gamma_j^{\alpha} = \gamma_j^{\beta}$ for every $\alpha \not= \beta$ in $\Gamma$ with $\alpha, \beta < \omega_1$ and $j=1, \ldots, k$.

Let $(\alpha_j)_{j=1}^\infty$ be an injective sequence in $\omega_1$. Write $\gamma_j^0 := \gamma_j^{\alpha}$ for all $\alpha < \omega_1$ and $j = 1, 2, \ldots, k$. Notice that $\{ (e_{\gamma_{k+1}^{\alpha_j}}, \ldots, e_{\gamma_n^{\alpha_j}} )\}_j$ is weakly null in $c_0(\Gamma)^{n-k}$. Moreover, by (\ref{c0_11}), $|M(F_{\alpha_j})| \geq \e$. Reordering and using again that $M$ is symmetric, we have that
\begin{equation*}
\left| M \left(e_{\gamma_1^{\alpha_j}}, \ldots, e_{\gamma_k^{\alpha_j}}, e_{\gamma_{k+1}^{\alpha_j}}, \ldots, e_{\gamma_n^{\alpha_j}} \right)	\right| \geq \e.
\end{equation*}

On the other hand, by the property the set $\Delta$ satisfies and by the fact that $M$ is weakly sequentially continuous, we have that
\begin{equation*}
\left| M \left(e_{\gamma_1^{\alpha_j}}, \ldots, e_{\gamma_k^{\alpha_j}}, e_{\gamma_{k+1}^{\alpha_j}}, \ldots, e_{\gamma_n^{\alpha_j}} \right)	\right| \rightarrow \left| M \left(e_{\gamma_1^0}, \ldots, e_{\gamma_k^0}, 0, \ldots, 0 \right) \right| = 0,
\end{equation*}
which is a contradiction.
\end{proof} 
 
\begin{fact} Let $E$ be a normed space that admits an analytic norm $\n$. Then there exists an analytic function $f\colon E\to\R$ such that $f(0)= 0$ and $\inf f(S_E)>1$.
\end{fact}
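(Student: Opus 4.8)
The natural first attempt is to take $f=2\|\cdot\|^2$. Since the norm is analytic on $E\setminus\{0\}$, so is $x\mapsto\|x\|^2$ (it is the composition of $\|\cdot\|$ with $t\mapsto t^2$); moreover it is convex, $2$-homogeneous, vanishes at the origin, and equals $2$ on $S_E$. Thus $f(0)=0$ and $\inf f(S_E)=2>1$, and everything works \emph{except} the analyticity of $f$ at the single point $0$. Pinning down and repairing this one defect is the whole content of the statement.

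I would first record \emph{why} the origin is a genuine obstacle, and in particular why no radial fix is possible. A $k$-homogeneous analytic function is automatically a continuous $k$-homogeneous polynomial, since in its expansion $\sum_n P_n$ at $0$ homogeneity forces $P_n=0$ for $n\neq k$. Hence $\|\cdot\|^2$ is analytic at $0$ only in the Hilbertian case, and, evaluating on a ray $x=tu$, any $f=\rho(\|\cdot\|)$ analytic at $0$ would force $P_n$ to be constant on $S_E$, i.e. some power of the norm to be a polynomial. As a general analytic norm is not of this polynomial type, the desired $f$ must be genuinely \emph{non-radial}, and it cannot be built by composing the norm with a function of one real variable.

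The plan is therefore to retain the coercive behaviour of a high even power of the norm while evaluating the norm away from its only singularity, by an averaging (regularisation) of shifts. Concretely, I would fix a large $\lambda>0$ and a centred, non-degenerate Gaussian Radon measure $\gamma$ on $E$, and set $f(x)=\int_E\|\lambda x+v\|^{2m}\,d\gamma(v)-\int_E\|v\|^{2m}\,d\gamma(v)$, so that $f(0)=0$. Coercivity on the sphere survives by Jensen's inequality: as $\|\cdot\|^{2m}$ is convex and $\gamma$ is centred,
\[
\int_E \|\lambda x+v\|^{2m}\,d\gamma(v)\ \geq\ \Bigl\|\lambda x+\int_E v\,d\gamma(v)\Bigr\|^{2m}=\lambda^{2m}\|x\|^{2m},
\]
whence $\inf f(S_E)\geq\lambda^{2m}-\int_E\|v\|^{2m}\,d\gamma(v)>1$ for $\lambda$ large (the subtracted constant being finite by Fernique integrability). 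The averaging over $v$ diffuses the singularity of $\|\cdot\|^{2m}$ into the interior of the integral, so that no single point of $E$ is exceptional.

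The main obstacle is exactly the analyticity of this regularised $f$ on all of $E$, including at $0$. Note that the construction of $f$, together with the bounds $f(0)=0$ and $\inf f(S_E)>1$, works for \emph{any} norm via Jensen; hence analyticity is the sole norm-dependent point, and it must fail for non-analytic norms (otherwise $c_0(\Gamma)$ would carry such an $f$, contradicting the multilinear argument built on Lemma \ref{symmetric}). This is precisely where the \emph{analyticity} of the original norm has to be exploited: in finite dimensions convolution with a Gaussian turns any power of a norm into a real-analytic function, and the task is to transfer this to the infinite-dimensional setting by differentiating under the integral sign and controlling the radius of convergence of the resulting power series uniformly, so as to invoke \cite[Chapter 1, Theorem 168]{HJ}. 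A cleaner route, if available, is to extract from the analytic norm a separating polynomial $P$ (with $P(0)=0$ and $\inf P(S_E)>0$) and take $f=cP$, since a polynomial is analytic on all of $E$; in that route the entire difficulty is concentrated in producing $P$ from the analyticity of the norm.
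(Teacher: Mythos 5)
Your diagnosis of the difficulty is accurate --- the only issue with $2\|\cdot\|^{2}$ is analyticity at the single point $0$ --- but your proposal does not close it: the entire content of the Fact is precisely the analyticity of the regularised $f$ at the origin, and that is the step you leave as ``the task''. Moreover, both mechanisms you suggest for carrying it out are obstructed. First, $E$ is merely a normed space, possibly incomplete and non-separable; a Gaussian Radon measure is supported on a separable subspace of the completion, so no non-degenerate $\gamma$ exists in general, and even on a separable Banach space convolution with a Gaussian measure regularises only in the Cameron--Martin directions --- it does not in general turn $\|\cdot\|^{2m}$ into a $C^\infty$, let alone analytic, function, so the Jensen estimate survives but the analyticity need not. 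Second, the ``cleaner route'' via a separating polynomial is unavailable: $c_0$ admits an equivalent analytic norm yet carries no separating polynomial (a space with a separating polynomial has nontrivial cotype), so no such $P$ can be extracted from analyticity of the norm in general.

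The paper's proof avoids the origin rather than regularising it. Pick a closed hyperplane $H$ and an affine translate $H_1=H+x_0$ with $0\notin H_1$; on $H_1$ the norm itself is already analytic, positive, and coercive, so $h\mapsto\|h+x_0\|$ is an analytic, positive, coercive function on $H$. Extend it to $E=H\oplus\R x_0$ by, say, $G(h+tx_0)=\|h+x_0\|^{2}+t^{2}$, which is analytic and coercive on all of $E$ (no point of $E$ is singular, because the norm is only ever evaluated at points of $H_1$). Finally set $f(x)=c\,\bigl(G(Rx)-G(0)\bigr)$, where $R$ is large enough that coercivity gives $\inf_{S_E}G(R\,\cdot)>G(0)$, and $c>0$ is chosen so that $\inf f(S_E)>1$; clearly $f(0)=0$. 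Note that your observation that any such $f$ must be non-radial is consistent with this construction: $G$ is genuinely not a function of the norm.
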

\begin{proof} Let $H$ be an hyperplane on $E$ and $H_1$ be an affine hyperplane obtained by translation of $H$. The function $\tilde{f}$ obtained by restricting $\n$ to $H_1$ is evidently analytic, coercive (\emph{i.e.}, $\lim_{\|x\|\to\infty}\tilde{f}(x)=\infty$), and positive; therefore, we can find a coercive, positive, and analytic function defined on $H$. It is easy to extend such function to an analytic and coercive function on $E$; finally, the function $f$ is obtained by translation and scaling.
\end{proof}

\begin{theorem} \label{c0} No dense subspace of $c_0(\omega_1)$ admits an analytic norm.	
\end{theorem}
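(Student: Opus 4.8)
The plan is to argue by contradiction, assuming that some dense subspace $Y\subseteq c_0(\omega_1)$ carries an equivalent analytic norm $\nn{\cdot}$, and to extract from it a direction along which the coercive function provided by the Fact is forced to vanish. First I would apply the Fact to produce an analytic $f\colon Y\to\R$ with $f(0)=0$ and $\inf f(S_Y)>1$, where $S_Y$ is the unit sphere of $\nn{\cdot}$. Expanding $f$ at the origin gives $f=\sum_{n=0}^\infty P_n$ on some ball $\{\nn{\cdot}<\delta\}$, with $P_n\in\mathcal P(^nY)$ and $P_0=f(0)=0$. Since on $Y$ the analytic norm is equivalent to $\n_\infty$, each $P_n$ is $\n_\infty$-continuous, hence (by density of $Y$) extends uniquely to $\widetilde P_n\in\mathcal P(^nc_0(\omega_1))$; let $M_n\in\mathcal L_s(^nc_0(\omega_1))$ denote its symmetric polar form.

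The next step is to locate a countable set of coordinates carrying all the local information of $f$ at $0$. Every continuous polynomial on $c_0(\omega_1)$ is weakly sequentially continuous, so each $M_n$ is a weakly sequentially continuous symmetric $n$-linear form. As $\omega_1$ has uncountable cofinality, Lemma~\ref{symmetric} furnishes a countable $A_n\subseteq\omega_1$ supporting $M_n$; put $A:=\bigcup_{n}A_n$, again countable. A short computation then shows that $f$ is blind to the coordinates outside $A$ near $0$: if $v\in Y$ has $\supp v\cap A=\emptyset$, then approximating $v$ in norm by finitely supported vectors supported off $A$ and using the support property of $M_n$ gives $\widetilde P_n(v)=M_n(v,\dots,v)=0$ for every $n$, whence $f(v)=0$ for all such $v$ with $\nn{v}<\delta$.

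With this in hand the contradiction is immediate \emph{provided one can exhibit a single nonzero $v\in Y$ with $\supp v\cap A=\emptyset$}. Indeed, for such a $v$ the one-variable function $t\mapsto f(tv)$ is real-analytic on $\R$ and vanishes on a neighbourhood of $0$ (since $tv$ is small and supported off $A$); by the identity theorem it vanishes identically, so $f\bigl(v/\nn{v}\bigr)=0$ with $v/\nn{v}\in S_Y$, contradicting $\inf f(S_Y)>1$.

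The hard part is precisely this last step, the passage from countable support to a genuine contradiction inside the \emph{dense} subspace $Y$. In the whole space $c_0(\omega_1)$ one would simply evaluate $f$ at $e_\gamma/\nn{e_\gamma}$ for any $\gamma\notin A$; the whole point of the theorem is that $Y$, being merely dense, may contain neither the canonical vectors $e_\gamma$ nor, a priori, any nonzero vector supported off the countable set $A$. To overcome this I would not work with a single coordinate but with an uncountable family of approximants $y_\gamma\in Y$, $\|y_\gamma-e_\gamma\|_\infty<\e$, indexed by $\gamma\in\omega_1\setminus A$, and run a $\Delta$-system/pressing-down selection on their relevant supports --- exactly in the spirit of the proof of Lemma~\ref{symmetric} --- so as to distil from the $y_\gamma$ a bona fide direction in $Y$ essentially supported off $A$ along which $f$ is constrained to be constant. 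Making this extraction robust against the fact that the coordinate projection onto $A$ need not carry $Y$ onto a well-behaved subspace is, I expect, the genuine technical core of the argument.
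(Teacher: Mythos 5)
Your first two steps coincide with the paper's: produce the coercive analytic $f$ via the Fact, extend the homogeneous terms $P_n$ to $\widetilde{P}_n$ on $c_0(\omega_1)$, invoke weak sequential continuity and Lemma~\ref{symmetric} to obtain the countable set $A$. The gap is in what you extract from Lemma~\ref{symmetric}. You only record that $\widetilde{P}_n(v)=0$ for $v$ supported off $A$, and you are then (rightly) stuck: a dense subspace $Y$ of $c_0(\omega_1)$ need not contain \emph{any} nonzero vector whose support misses a prescribed countable set --- vanishing on all of $A$ imposes countably many independent linear conditions --- and your proposed $\Delta$-system extraction of a direction ``essentially supported off $A$'' does not connect back to anything you have proved, since your conclusion about $f$ applies only to vectors \emph{exactly} supported off $A$. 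As written, the proof is therefore incomplete at precisely the step you defer to ``the genuine technical core''.

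The missing idea is to use the full strength of the support lemma: since $\widetilde{M}_n(e_{\gamma_1},\dots,e_{\gamma_n})=0$ whenever $\{\gamma_1,\dots,\gamma_n\}\not\subseteq A$, multilinearity (first on finitely supported vectors, then by continuity) yields $\widetilde{P}_n(x)=\widetilde{P}_n\left(x\flag_A\right)$ for \emph{every} $x\in c_0(\omega_1)$, i.e.\ near the origin $f$ factors through the restriction to the countable set $A$. With this identity no second combinatorial extraction is needed: by density choose a norm-one $x\in Y$ with $\|x-e_\gamma\|_\infty<\e$ for some $\gamma\notin A$, so that $\left\|x\flag_A\right\|_\infty<\e$; then $f(x)=\sum\widetilde{P}_n\left(x\flag_A\right)$ is small by continuity at the origin (after the routine one-dimensional rescaling along $t\mapsto tx$ to pass from the ball of convergence to the unit sphere), contradicting $\inf f(S_Y)>1$. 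This is exactly how the paper concludes, and it replaces the pressing-down argument you anticipate with a one-line use of density.
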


\begin{proof} Let $E$ be a dense subspace of $c_0(\omega_1)$ and suppose by contradiction that $E$ admits an analytic norm $\n$. Let $f\colon E\to\R$ be a function as in the previous fact. Since $f$ is analytic on $E$, there exists $\delta>0$ such that
\begin{equation*}
f(x) = \sum_{n=0}^{\infty} P_n(x),\, \qquad (x\in E, \|x\|<\delta),
\end{equation*}
where $P_n\in \mathcal{P}(^nE)$. Since each $P_n \in \mathcal{P}(^n E)$ is uniformly continuous on bounded sets, there exists a unique extension $\widetilde{P}_n \in \mathcal{P}(^n c_0(\omega_1))$ with $\|P_n\|_E = \|\widetilde{P}_n\|_{c_0(\omega_1)}$. 

Appeal to Lemma \ref{symmetric} yields that, for every $n\in\N\cup\{0\}$, there is a countable set $A_n \subseteq\Gamma$ such that for all $\{ \gamma_1, \ldots, \gamma_n \} \not\subseteq A_n$, we have that $\widetilde{M}_n (e_{\gamma_1}, \ldots, e_{\gamma_n}) = 0$. Again by the Polarisation Formula, we obtain that each $\widetilde{P}_n$ is {\it countably supported} in the sense that
\begin{equation*}
\widetilde{P}_n (x) = \widetilde{P}_n \left( x\flag_{A_n} \right) \qquad (x\in c_0(\omega_1)).
\end{equation*}
Now, by taking $A := \bigcup A_n$, we have that $A$ is still countable and $\widetilde{P}_n (x) = \widetilde{P}_n \left( x\flag_{A} \right)$ for every $n \in \N$ and every $x$ in $c_0(\omega_1)$. Therefore, 
\begin{equation*}
f(x) = f\left( x\flag_A \right)	\qquad (x\in E, \|x\|<\delta).
\end{equation*}
Now, since $E$ is dense in $c_0(\omega_1)$, we may choose $x \in E$ such that $\|x\|=1$ and $\|x\flag_A\|<\e$, where $\e>0$ is very small. By the continuity of $f$, the value $f\left(x\flag_A \right)$ must be small as well. On the other hand, we have that $f(x) \geq \inf f(S_E) > 1$, which gives us the desired contradiction.
\end{proof}

	\section{$C^\infty$-smoothness}
	This section is dedicated to the construction of smooth norms in presence of an unconditional basis; in particular, the main result of the section readily implies Theorem B. Before we proceed, we need to recall one definition (see, \emph{e.g.}, \cite[\S 7.3]{HMVZ}). A set $\{e_\gamma\}_{\gamma \in \Gamma}$ in a Banach space $X$ is called an \emph{unconditional Schauder basis} of $X$ if for every $x \in X$ there is a unique family of real numbers $\{a_{\gamma}\}_{\gamma \in \Gamma}$ such that $x = \sum_{\gamma \in \Gamma} a_{\gamma} x_{\gamma}$ in the following sense: for every $\e > 0$, there is a finite subset $F \subset \Gamma$ such that
	\begin{equation*}
	\left\| x - \sum_{\gamma \in G} a_{\gamma} e_{\gamma} \right\| < \e,
	\end{equation*}
	whenever $F\subseteq G$. If $\{ e_{\gamma} \}_{\gamma \in \Gamma}$ is an unconditional basis of $X$ and $A$ is a subset of $\Gamma$, then there is a naturally defined bounded linear projection $P_A$ from $X$ onto $\overline{\Span}\{e_\gamma\}_{\gamma\in A}$ defined by $P_A(x) = \sum_{\gamma \in A} \langle e_\gamma^*,x\rangle e_{\gamma}$. The number $\sup_{A\subseteq\Gamma} \|P_A\|$ is called the \emph{suppression constant} of the basis; accordingly, a basis is \emph{suppression $1$-unconditional} when $\sup_{A\subseteq \Gamma}\|P_A\|=1$.

	\begin{theorem} \label{unconditional} Let $X$ be a Banach space with a suppression $1$-unconditional Schauder basis $\{e_{\gamma}\}_{\gamma \in \Gamma}$ and set $Y:= \Span \{e_{\gamma}\}_{\gamma \in \Gamma}$. Then, $Y$ is a dense subspace of $X$ which admits a $C^{\infty}$-smooth norm that approximates the original one.	
	\end{theorem}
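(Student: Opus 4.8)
The plan is to reduce the problem to the separable, countable-index case already handled by Theorem \ref{equiv-analytic-norm-main} (via its consequence Theorem \ref{hamel}), by exploiting the unconditionality of the basis to build the norm coordinatewise. The key structural feature of a suppression $1$-unconditional basis is that for any $x = \sum_\gamma a_\gamma e_\gamma \in Y$ the support is \emph{finite}, so only finitely many coordinates are ever in play for a single vector, and the suppression projections $P_A$ all have norm $1$. My first step is therefore to fix a countable ``template'' smooth norm. Using Theorem \ref{hamel}, the countably-supported space $c_{00}=\Span\{e_n\}_{n\in\N}$ admits an analytic (hence $C^\infty$) equivalent norm, say $\nn{\cdot}$, approximating the $\ell_\infty$- or canonical norm; more usefully, I would record a concrete $C^\infty$-smooth convex function on the positive cone built from the coordinate functionals, so that the construction can be transported to arbitrary finite subsets of $\Gamma$.

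The heart of the argument is to define a single function $\varphi\colon Y\to\R$ that is symmetric, convex, locally depends on only finitely many coordinates, and agrees (up to the uniformity needed for Lemma \ref{minkowlemma}) with a smooth template on each finite-dimensional coordinate subspace. Concretely, I would seek $\varphi$ of the shape $\varphi(x)=\sum_{\gamma\in\Gamma}\theta(\langle e_\gamma^*,x\rangle)$ or a suitably coupled variant, where $\theta$ is an even, convex, $C^\infty$ one-variable function chosen so that the level set $B:=\{\varphi\le a\}$ is a bounded, closed, symmetric convex body squeezed between $B_X$ and $(1+\delta)B_X$. Because each $x\in Y$ has finite support, the sum is finite and $\varphi$ is genuinely $C^\infty$ in a neighbourhood of any point of $Y$: near a point with support $F$, only the coordinates in $F$ (plus possibly finitely many neighbours that could become active) contribute, and on that finite-dimensional slice $\varphi$ reduces to a finite sum of smooth functions. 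The suppression $1$-unconditionality is exactly what guarantees that the monotone/separable structure of $\theta$ applied coordinatewise produces a convex function comparable to the norm, since $\|P_A x\|\le\|x\|$ lets me control the contribution of any coordinate block.

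I would then verify the hypotheses of Lemma \ref{minkowlemma}: $\varphi$ is even (take $\theta$ even), convex (a sum of convex functions, using $1$-unconditionality so that coordinatewise growth is dominated by the norm), and continuous; the level set $B=\{\varphi\le a\}$ is bounded because $\theta$ grows past the norm, and crucially \emph{closed in $X$}, not merely in $Y$. Establishing closedness in the ambient Banach space $X$ is, I expect, the main obstacle, precisely because $Y$ is only dense: the cautionary example after Lemma \ref{minkowlemma} shows that closedness in $Y$ alone is worthless. To secure this I would arrange the sandwich $(1-\e)B_X\cap Y\subseteq B\subseteq B_X\cap Y$ and use lower semicontinuity of $\varphi$ together with the fact that $B$ inherits a closed-graph description from the coordinate functionals $e_\gamma^*$, which are $X$-continuous; the boundedness and the comparison with $B_X$ then force the $X$-closure of $B$ to stay inside $Y$. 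Once $B$ is a bounded, closed, symmetric convex body with a $C^\infty$ neighbourhood of $\{\varphi=a\}$, Lemma \ref{minkowlemma} yields that $\mu_B$ is an equivalent $C^\infty$-smooth norm on $Y$, and the sandwich gives the approximation of the original norm, completing the proof.
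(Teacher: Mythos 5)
There is a genuine gap at the heart of your proposal: the coordinatewise ansatz $\varphi(x)=\sum_{\gamma\in\Gamma}\theta(\langle e_\gamma^*,x\rangle)$ cannot work for a general suppression $1$-unconditional basis, and the ``suitably coupled variant'' you allow yourself is exactly the missing construction. A purely coordinatewise sum has level sets invariant under all permutations of the coordinates, so its Minkowski functional is (a smoothed) Orlicz norm. Already for $X=\ell_1^2\oplus_\infty\ell_1^2$ with its canonical (suppression $1$-unconditional) basis, $(1,0,1,0)\in B_X$ while $\|(1,1,0,0)\|=2$, so no permutation-invariant body sits between $B_X$ and $(1+\delta)B_X$ for $\delta<1$; taking $c_0$- or $\ell_\infty$-sums of $\ell_1^n$ blocks destroys even equivalence, not just approximation. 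The same symmetry obstruction rules out transporting a single ``template'' from Theorem \ref{hamel}, which in any case yields a norm tied to an embedding of $c_{00}$ into $\ell_\infty$ and bearing no relation to $\n$ on the various finite blocks of $X$. The paper's proof instead keeps \emph{all} the finite-dimensional data: it fixes a $C^\infty$-smooth norm $\n_{(s),A}$ on $P_A(X)$ for \emph{every} finite $A\subseteq\Gamma$, replaces $\n$ by the weighted supremum $\|x\|_f=\sup_{|A|<\omega}(1+\e_{|A|})\|P_A(x)\|$ with weights decreasing in $|A|$, proves a quantitative ``jump'' estimate showing that near a point of support size $n$ only the sets $A\subseteq\supp x$ can come within $\theta_{|A|}$ of the supremum, and then sums the bump-composed terms $\rho_{|A|}\bigl((1+\e_{|A|})(1+\theta_{|A|})\|P_A(x)\|_{(s),A}\bigr)$ over all finite $A$; the jump estimate is what makes this sum locally finite, hence $C^\infty$, and keeps its level set equivalent to $B_Y$. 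None of this coupling between blocks of different sizes is present in, or derivable from, your ansatz.

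A secondary misreading: the closedness demanded by Lemma \ref{minkowlemma} is closedness in the normed space carrying the Minkowski functional, here $Y$, not in the completion $X$. The cautionary example after the lemma warns that closedness in the domain $D$ of the convex function is insufficient; in the actual application the level set $\{\Psi\leq 1-\theta_1\}$ is closed in $Y$ by lower semicontinuity of $\Psi$, and it is typically \emph{not} closed in $X$ (its $X$-closure leaves $Y$). So the step you single out as the main obstacle is not where the difficulty lies; the difficulty is the gluing construction itself.
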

	
	\begin{proof} Let $\{e_\gamma\}_{\gamma\in\Gamma}$ be a suppression $1$-unconditional Schauder basis for the Banach space $(X,\n)$ and set $Y:=\Span\{e_\gamma\}_{\gamma\in\Gamma}$. We shall start by fixing some parameters. Let $(\e_k)_{k=0}^\infty$ be a strictly decreasing sequence of positive real numbers such that
		\begin{equation}\label{seq1}
		\e_k \searrow 0 \ \ \ \mbox{and} \ \ \ \frac{1 + \e_{k+1}}{1 + \e_{k}} \nearrow 1.
		\end{equation}
		Now, let $(\theta_k)_{k=0}^\infty$ be another decreasing sequence of positive real numbers such that
		\begin{equation} \label{seq2}
		\theta_k \searrow 0 \ \ \ \mbox{and} \ \ \ \frac{1 + \e_{k+1}}{1 + \e_{k}} < 1 - 2 \theta_{k+1} \ \mbox{for every} \ k\geq0.
		\end{equation}
		
		For each finite subset $A \subseteq\Gamma$, we have that $P_A(X)$ is a finite-dimensional Banach space and therefore, we can pick a $C^\infty$-smooth norm $\n_{(s),A}$ on $P_A(X)$ such that
		\begin{equation} \label{smooth-norm}
		\frac{1}{1 + \theta_{|A|}}\n \leq\n_{(s),A}\leq\n.
		\end{equation}
		
		We are now in position to define the following norm $\n_f$ on $Y$:
		\begin{equation*}
		\|x\|_f:=\sup_{|A|<\omega} (1 + \e_{|A|})\cdot\|P_A(x)\| \qquad(x\in Y).	
		\end{equation*}
		Let us notice that this new norm approximates the original one:
		\begin{equation} \label{f-normal-approx}
		\n\leq\n_f\leq (1+\e_1)\n.
		\end{equation}
		Moreover, for every $x\in Y$, we clearly have
		\begin{equation*}\|x\|_f = \max_{A\subseteq\supp x} (1 + \e_{|A|})\cdot\|P_A(x)\|.
		\end{equation*}
		
		We shall now prove that the above maximum is, indeed, locally attained in a strong sense. More precisely, we prove the following claim.
		\begin{claim}\label{Claim: jump-ngh} Let $x \in Y$ be such that $\|x\|_f \leq 1$. Set $n= |\supp x|$ and consider the open neighbourhood $\mathcal{O}_x$ of $x$ given by
			\begin{equation*} \mathcal{O}_x := \{y\in Y\colon\|y-x\|_f< \theta_{n+1}\}.
			\end{equation*}
			Then, for each $y \in \mathcal{O}_x$ and $A\nsubseteq\supp x$ we have
			\begin{equation}\label{jump-ngh}
			(1 + \e_{|A|})\cdot\|P_A(y)\|_{(s),A} \leq 1-\theta_{|A|}.
			\end{equation}
		\end{claim}
		
		\begin{proof}[Proof of Claim \ref{Claim: jump-ngh}] We start proving the following stronger estimate, valid for the vector $x$ and for every $A\nsubseteq\supp x$:
			\begin{equation} \label{jump}
			(1 + \e_{|A|})\|P_A(x)\|\leq 
			\begin{cases} \displaystyle
			\frac{1+\e_{|A|}}{1 + \e_{|A|-1}}, & \mbox{if}  \ |A| \leq n, \vspace{0.3cm} \\  \displaystyle
			\frac{1 + \e_{n+1}}{1 + \e_n}, & \mbox{if} \ |A| > n.
			\end{cases}
			\end{equation}
			Indeed, set $B := \supp x$ and take any $A\nsubseteq B$. Notice that we may assume $A \cap B \not= \emptyset$ since, otherwise, (\ref{jump}) is trivially true. Since $\|x\|_f\leq 1$, we have
			\begin{equation*}
			(1+\e_{|A|})\|P_A(x)\| = \frac{1+\e_{|A|}}{1+\e_{|A\cap B|}} (1+\e_{|A\cap B|})\|P_{A \cap B}(x)\|\leq
			\end{equation*}
			\begin{equation*}
			\leq \frac{1+\e_{|A|}}{1+\e_{|A \cap B|}}\|x\|_f \leq \frac{1+\e_{|A|}}{1+\e_{|A \cap B|}}.    
			\end{equation*}
			
			Assume first that $|A| \leq n$. Since $A\nsubseteq B$, we have $|A\cap B|\leq|A|-1$; thus, since $(\e_k)_{k=0}^\infty$ is decreasing,
			\begin{equation*}
			(1+\e_{|A|})\|P_A(x)\| \leq\frac{1+\e_{|A|}} {1+\e_{|A\cap B|}} \leq\frac{1+\e_{|A|}}{1+\e_{|A|-1}}, 	
			\end{equation*}
			which proves the first part of (\ref{jump}). On the other hand, if $|A| > n$, then, using again that $(\e_k)_{k=0}^{\infty}$ is decreasing and that $|B| = n$, we have
			\begin{equation*}
			(1+\e_{|A|})\|P_A(x)\|\leq\frac{1+\e_{|A|}}{1+ \e_{|A\cap B|}}\leq\frac{1+\e_{|A|}}{1+\e_{|B|}}\leq \frac{1+\e_{n+1}}{1+\e_{|B|}}=\frac{1+\e_{n+1}}{1+\e_n}.	\end{equation*}
			Therefore, (\ref{jump}) is proved.\smallskip
			
			We now pass to the proof of (\ref{jump-ngh}). Given any $y\in\mathcal{O}_x$ and any finite set $A$ with $A\nsubseteq \supp x$, we have:
			\begin{equation}\label{eq: from jump to ngbh}
			\begin{split}
			 (1+\e_{|A|})\|P_A(y)\|_{(s),A}&\stackrel{(\ref{smooth-norm})} {\leq} (1+\e_{|A|})\|P_A(y)\| \\& \leq (1+\e_{|A|})\|P_A(x)\| + (1+\e_{|A|})\|P_A(y-x)\| \\&
			\leq(1+\e_{|A|})\|P_A(x)\|+\|y-x\|_f < (1+\e_{|A|})\|P_A(x)\| + \theta_{n+1}.
			\end{split}
			\end{equation}
			
			If $|A|\leq n$, we continue from (\ref{eq: from jump to ngbh}) and we obtain
			$$(1+\e_{|A|})\|P_A(y)\|_{(s),A}< (1+\e_{|A|})\|P_A(x)\|+ \theta_{n+1}\stackrel{(\ref{jump})}{\leq} \frac{1+\e_{|A|}} {1+\e_{|A|-1}}+\theta_{|A|}\stackrel{(\ref{seq2})}{\leq} 1-\theta_{|A|}.$$
			
			On the other hand, if $|A|>n$, (\ref{eq: from jump to ngbh}) gives
			$$(1+\e_{|A|})\|P_A(y)\|_{(s),A} \stackrel{(\ref{jump})}{\leq} \frac{1+\e_{n+1}} {1+\e_n}+\theta_{n+1} \stackrel{(\ref{seq2})}{\leq} 1-\theta_{n+1}\leq 1-\theta_{|A|}.$$
		\end{proof}
		
		Next, we shall glue together in a smooth way the norms $\{\n_{(s),A}\}_{|A|<\omega}$. For every $n<\omega$, let $\rho_n\colon \R\to [0,\infty)$ be a $C^{\infty}$-smooth, even, and convex function such that $\rho_n\equiv0$ on $[0,1- \theta_n^2]$ and $\rho_n(1)=1$. Then, for every $n<\omega$, $\rho_n$ is strictly monotonically increasing in $[1-\theta_n^2,\infty)$. Define $\Psi\colon Y\to[0,\infty]$ by
		\begin{equation*}
		\Psi(x):=\sum_{|A|<\omega} \rho_{|A|}\left((1+\e_{|A|})\cdot (1+\theta_{|A|})\cdot\|P_A(x)\|_{(s),A}\right) \ \ \ (x \in Y).
		\end{equation*}
		
		We first prove that $\Psi$ is (real-valued and) $C^\infty$-smooth on the open set $\mathcal{O}$ defined by
		\begin{equation*}
		\mathcal{O} := \bigcup \left\{\mathcal{O}_x\colon x\in Y, \|x\|_f\leq1  \right\}.	
		\end{equation*}
		Indeed, we actually show that $\Psi$ is locally expressed by a finite sum on $\mathcal{O}$ (whence the claim follows, since every summand is plainly $C^\infty$-smooth). Pick $x \in Y$ with $\|x\|_f\leq1$ and let $y\in\mathcal{O}_x$. Then, for every finite set $A$ with $A \nsubseteq\supp(x)$, (\ref{jump-ngh}) yields us
		\begin{equation*}
		(1 + \e_{|A|})(1 + \theta_{|A|})\|P_A(x)\|_{(s),A} \leq (1 + \theta_{|A|})(1 - \theta_{|A|}) = 1 - \theta_{|A|}^2,	
		\end{equation*}
		which implies that $\rho_{|A|}\left((1+\e_{|A|})(1+\theta_{|A|}) \|P_A(x)\|_{(s),A}\right)=0$. This shows that, on the open set $\mathcal{O}_x$, only the finitely many terms with $A\subseteq \supp x$ give a non-zero contribution to $\Psi$.\smallskip
		
		Moreover, we note that
		\begin{equation} \label{approx}
		\left\{x\in Y\colon\|x\|_f\leq\frac{1-\theta_1}{1+\theta_1} \right\} \subseteq\left\{\Psi\leq 1\right\} \subseteq\left\{x\in Y\colon\|x\|_f\leq 1\right\}\subseteq\mathcal{O}.	
		\end{equation}
		Indeed, the last inclusion is immediate by the definition of the set $\mathcal{O}$. If we take $x \in Y$ with $\Psi(x) \leq 1$, then we have that $\rho_{|A|}\left((1+\e_{|A|})(1+\theta_{|A|}) \|P_A(x)\|_{(s),A}\right)\leq1$ for every finite set $A$. By the properties of the functions $\rho_n$, we then get $(1+\e_{|A|})(1+\theta_{|A|}) \|P_A(x)\|_{(s),A}\leq1$. Therefore, by (\ref{smooth-norm}),
		\begin{equation*}
		1 \geq (1 + \e_{|A|})(1 + \theta_{|A|})\|P_A(x)\|_{(s),A} \geq (1 + \e_{|A|}) \|P_A(x)\|,	
		\end{equation*}
		which implies that $\|x\|_f \leq 1$. This shows the inclusion $\{ \Psi \leq 1 \} \subseteq \{\n_f \leq 1 \}$. Finally, if $x \in Y$ satisfies $\|x\|_f\leq\frac{1-\theta_1}{1+\theta_1}$, then, for every finite set $A$,
		
		\begin{eqnarray*}
			(1+\e_{|A|})(1+\theta_{|A|})\|P_A(x)\|_{(s),A} &\leq& (1+\e_{|A|})(1+\theta_{|A|})\|P_A(x)\| \\
			&\leq& \frac{1-\theta_1}{1+\theta_1}(1+\theta_{|A|}) \leq 1-\theta_1 \leq 1-\theta_{|A|}^2.
		\end{eqnarray*}
		So, we actually showed that if $x\in Y$ satisfies $\|x\|_f \leq \frac{1-\theta_1}{1+\theta_1}$, then $\Psi(x)=0$.
		
		\vspace{0.2cm}
		
		For the last part of the proof, we would like to adjust all the information we have so far in order to apply Lemma \ref{minkowlemma}. Consider the convex set $D:=\{\Psi<1\}$; by (\ref{approx}), we know that $D\subseteq\mathcal{O}$, whence $D$ is an open set in $Y$. Moreover, $\Psi$ is even, convex, and $C^\infty$-smooth on $D$. (Notice that we cannot apply Lemma \ref{minkowlemma} directly to the set $\mathcal{O}$, since it is not \emph{a priori} clear that $\mathcal{O}$ is convex.) Also, the level set $\{\Psi\leq1-\theta_1\}$ is a closed subset of $Y$, by the lower semi-continuity of $\Psi$. Therefore, we can apply Lemma \ref{minkowlemma}, which assures us that the Minkowski functional $\nn\cdot$ of $\{\Psi\leq1-\theta_1\}$ is an equivalent $C^{\infty}$-smooth norm on $Y$.  
		
		Finally, by using the previous observations and (\ref{approx}), we get that
		\begin{equation*}
		\n_f\leq \nn\cdot\leq \frac{1+\theta_1}{1-\theta_1}\n_f;
		\end{equation*}
		combining with (\ref{f-normal-approx}), we conclude that $\nn\cdot$ is a $C^{\infty}$-smooth norm on $Y$ which approximates $\n$, as desired.	
	\end{proof}		
	
	As a particular case of Theorem \ref{unconditional}, we have the following corollary.
	\begin{corollary} The linear span of the canonical basis of $\ell_p(\Gamma)$ ($1\leq p<\infty$) admits a $C^{\infty}$-smooth norm.
	\end{corollary}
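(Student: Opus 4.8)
The plan is to observe that this is a direct specialization of Theorem~\ref{unconditional}, so the entire task reduces to checking that the canonical basis $\{e_\gamma\}_{\gamma\in\Gamma}$ of $\ell_p(\Gamma)$ meets the hypotheses of that theorem, namely that it is a suppression $1$-unconditional Schauder basis, and then invoking the theorem with $X=\ell_p(\Gamma)$.

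First I would recall the standard fact that, for $1\leq p<\infty$, the canonical basis is an unconditional Schauder basis of $\ell_p(\Gamma)$ in the sense defined above: each $x=(x(\gamma))_{\gamma\in\Gamma}\in\ell_p(\Gamma)$ has at most countable support, the coefficients $a_\gamma=x(\gamma)$ are uniquely determined, and the finite partial sums $\sum_{\gamma\in G}x(\gamma)e_\gamma$ converge to $x$ in norm as $G$ ranges over the finite subsets of $\Gamma$ directed by inclusion.

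The only computation worth spelling out is the suppression constant. For any $A\subseteq\Gamma$ the associated projection acts coordinatewise, $P_A(x)=(x(\gamma)\chi_A(\gamma))_{\gamma\in\Gamma}$, so that
\begin{equation*}
\|P_A(x)\|_p^p=\sum_{\gamma\in A}|x(\gamma)|^p\leq\sum_{\gamma\in\Gamma}|x(\gamma)|^p=\|x\|_p^p.
\end{equation*}
Hence $\|P_A\|\leq1$ for every $A$, while testing on $x=e_\gamma$ with $\gamma\in A$ gives the reverse inequality; thus $\sup_{A\subseteq\Gamma}\|P_A\|=1$ and the basis is suppression $1$-unconditional.

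With these two points in hand I would simply apply Theorem~\ref{unconditional}, which yields that $Y=\Span\{e_\gamma\}_{\gamma\in\Gamma}$ is a dense subspace of $\ell_p(\Gamma)$ carrying a $C^\infty$-smooth norm (indeed one approximating $\|\cdot\|_p$). I do not anticipate any genuine obstacle: the result is an immediate corollary, and the suppression $1$-unconditionality verified above is the single (and essentially trivial) ingredient that the statement requires beyond Theorem~\ref{unconditional}.
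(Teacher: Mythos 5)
Your proposal is correct and coincides with the paper's intended argument: the corollary is stated there precisely as a particular case of Theorem~\ref{unconditional}, with the (trivial) suppression $1$-unconditionality of the canonical basis of $\ell_p(\Gamma)$ being the only hypothesis to check, exactly as you do.
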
 
	
	In the case $p=1$, we can argue as in the proof of Corollary \ref{Cor: analytic in dense of l1c} and obtain the following stronger result.	
	\begin{corollary} Every dense subspace of $\ell_1(\Gamma)$ contains a further dense subspace which admits a $C^{\infty}$-smooth norm.
	\end{corollary}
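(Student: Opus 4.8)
The plan is to repeat the proof of Corollary~\ref{Cor: analytic in dense of l1c} almost verbatim, the only change being that the analytic renorming of the finitely supported vectors, previously supplied by Theorem~\ref{l1}, is now replaced by the $C^\infty$-smooth one coming from Theorem~\ref{unconditional}. Fix a dense subspace $Z$ of $\ell_1(\Gamma)$ and let $(e_\gamma)_{\gamma\in\Gamma}$ be the canonical basis of $\ell_1(\Gamma)$. The first step is to invoke \cite[Theorem~2.3]{HR} in order to produce, inside $Z$, a \emph{dense} subspace $W$ that is linearly isomorphic to $Y:=\Span\{e_\gamma\}_{\gamma\in\Gamma}$. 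This is the step carrying all the non-trivial content and it is precisely the ingredient already used in Corollary~\ref{Cor: analytic in dense of l1c}: one selects an approximate copy $\{x_\gamma\}_{\gamma\in\Gamma}\subseteq Z$ of the canonical basis, close enough that $e_\gamma\mapsto x_\gamma$ extends to an isomorphism $T\colon Y\to W:=\Span\{x_\gamma\}_{\gamma\in\Gamma}$, while the resulting span remains dense in $\ell_1(\Gamma)$.

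Next I would apply Theorem~\ref{unconditional} to the space $\ell_1(\Gamma)$, whose canonical basis is suppression $1$-unconditional; this yields (equivalently, the preceding corollary in the case $p=1$) an equivalent $C^\infty$-smooth norm $\nn\cdot$ on $Y=\Span\{e_\gamma\}_{\gamma\in\Gamma}$. It then remains to transport this norm to $W$ along $T$: the function $w\mapsto\nn{T^{-1}w}$ is an equivalent norm on $W$, and it is $C^\infty$-smooth because it is the composition of the $C^\infty$-smooth norm $\nn\cdot$ with the bounded linear bijection $T^{-1}$, and composing a $C^\infty$-smooth function with a bounded linear map preserves $C^\infty$-smoothness. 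Since $W$ is a dense subspace of $\ell_1(\Gamma)$ contained in $Z$, this exhibits the desired further dense subspace equipped with a $C^\infty$-smooth norm.

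I do not anticipate any genuine obstacle: the whole difficulty is concentrated in the quoted perturbation result \cite[Theorem~2.3]{HR}, used here exactly as in the analytic case, whereas the transfer of $C^\infty$-smoothness along an isomorphism is routine. The only point deserving a moment's care is that \cite[Theorem~2.3]{HR} really produces a \emph{dense} copy of $Y$ and not merely an isomorphic one. It is also worth stressing that, in contrast with Corollary~\ref{Cor: analytic in dense of l1c}, here no restriction on the cardinality of $\Gamma$ is needed: the analytic argument required an isometric embedding of $\ell_1(\mathfrak{c})$ into $\ell_\infty$ (forcing $|S|\leq\mathfrak{c}$), while the smooth renorming of $Y$ furnished by Theorem~\ref{unconditional} is available for every index set $\Gamma$.
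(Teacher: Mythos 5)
Your proposal is correct and follows exactly the route the paper takes: the paper's proof is the one-line remark that one argues as in Corollary~\ref{Cor: analytic in dense of l1c}, replacing the analytic norm from Theorem~\ref{l1} by the $C^\infty$-smooth norm on $\Span\{e_\gamma\}_{\gamma\in\Gamma}$ supplied by Theorem~\ref{unconditional}, with \cite[Theorem~2.3]{HR} providing the dense isomorphic copy inside the given subspace. Your observation that the cardinality restriction $|\Gamma|\leq\mathfrak{c}$ disappears in the smooth case is accurate and is precisely why the paper states this as a result for arbitrary $\Gamma$.
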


	\section{Proof of Theorem C}
	In this section, we shall show that our techniques in Section \ref{Sec: Analytic} admit no extension to larger Banach spaces. More precisely, we shall prove that Theorem \ref{equiv-analytic-norm-main} admits no extension to the spaces $\ell_\infty^c(\Gamma)$, $\Gamma$ uncountable (see Corollary \ref{Cor: no Gateaux norm}). The main result of the section is the forthcoming strengthening of Theorem C.
	
	\begin{theorem}\label{Th: c00 in renorming} Let $\Gamma$ be a cardinal number with ${\rm cf}\,\Gamma\geq \omega_1$. Then, every renorming of $\ell_{\infty}^{c,F}(\Gamma)$ contains an isometric copy of $\ell_\infty^{c,F}(\Gamma)$.
	\end{theorem}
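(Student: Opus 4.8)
The plan is to follow the strategy behind Partington's theorem, producing the isometric copy as the range of a \emph{block map}. Fix an equivalent norm $\nn\cdot$ and constants $0<a\le b$ with $a\n_\infty\le\nn\cdot\le b\n_\infty$. I would look for a transfinite family $(A_\gamma)_{\gamma<\Gamma}$ of pairwise disjoint \emph{countable} subsets of $\Gamma$ and a constant $\lambda>0$ such that the block map $T\colon\ell_\infty^{c,F}(\Gamma)\to\ell_\infty^{c,F}(\Gamma)$, $Tx:=\sum_{\gamma<\Gamma}x(\gamma)\chi_{A_\gamma}$, satisfies $\nn{Tx}=\lambda\|x\|_\infty$ for every $x$; rescaling $T$ by $\lambda^{-1}$ then yields the desired isometric embedding. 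Note that $T$ is well defined into $\ell_\infty^{c,F}(\Gamma)$ and $\n_\infty$-isometric: if $x$ is finitely valued with countable support, then $Tx$ is supported on the countable set $\bigcup_{\gamma\in\supp x}A_\gamma$ and takes the same finitely many values as $x$. Grouping the coordinates of $x$ according to its finitely many values $d_1,\dots,d_m$, the element $Tx$ becomes a \emph{finite} combination $\sum_{k=1}^m d_k\chi_{B_k}$, where $B_k=\bigcup\{A_\gamma:x(\gamma)=d_k\}$ are pairwise disjoint countable unions of blocks. Hence it suffices to control such finite combinations.

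The finitely-valued structure is exactly what makes the next, decisive reduction possible. Assuming $\max_k|d_k|=1$, the vector $(d_1,\dots,d_m)$ is a convex combination of the finitely many sign vectors $\sigma\in\{-1,1\}^m$, so by convexity of $\nn\cdot$ one gets $\nn{\sum_k d_k\chi_{B_k}}\le\max_\sigma\nn{\sum_k\sigma_k\chi_{B_k}}$, and each extreme term equals $\nn{\chi_C-\chi_D}$ for disjoint unions of blocks $C,D$. I would therefore set $\lambda:=\inf\{\nn{\chi_B}:B\text{ a union of blocks}\}\ge a>0$ and aim to construct the blocks so that
\begin{equation*}
\nn{\textstyle\sum_{\gamma\in S}\e_\gamma\chi_{A_\gamma}}\le\lambda\qquad\text{for every countable }S\subseteq\Gamma\text{ and all signs }\e\in\{-1,1\}^S. \tag{$\star$}
\end{equation*}
Granting $(\star)$, the upper bound $\nn{Tx}\le\lambda\|x\|_\infty$ follows from the convexity reduction, while the matching lower bound is free: writing $y=Tx$ with $|d_{k_0}|=\max_k|d_k|$ and flipping the signs of all but the $k_0$-th block produces $\tilde y$ with $\tfrac12(y+\tilde y)=d_{k_0}\chi_{B_{k_0}}$, whence $\lambda|d_{k_0}|\le\nn{d_{k_0}\chi_{B_{k_0}}}\le\tfrac12(\nn y+\nn{\tilde y})\le\tfrac12(\nn y+\lambda|d_{k_0}|)$, using $(\star)$ for $\tilde y$ and $\nn{\chi_{B_{k_0}}}\ge\lambda$. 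Thus the whole theorem collapses to the single statement $(\star)$: the smallest pure characteristic function must dominate every signed sum.

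The construction of blocks satisfying $(\star)$ is the heart of the matter, and the hypothesis $\mathrm{cf}\,\Gamma\ge\omega_1$ enters in two ways. First, every countable subset of $\Gamma$ is bounded, so a transfinite recursion of length $\Gamma$ always has $\Gamma$ many fresh coordinates available and can diagonalize against all countable index sets. Second, uncountable cofinality is what makes every stabilization step work: splitting $[a,b]$ into countably many small intervals, one of them necessarily pulls back to a full-size ($\Gamma$) subfamily. I would start from an arbitrary partition of $\Gamma$ into $\Gamma$ many countable blocks, stabilize the values $\nn{\chi_{A_\gamma}}$ near $\lambda$, and then pass, via the $\Delta$-system lemma exactly as in Lemma~\ref{symmetric}, to a subfamily whose supports form a $\Delta$-system; on such a subfamily any sequence of distinct blocks $\chi_{A_\gamma}$ is weak$^*$-null in $\ell_\infty(\Gamma)=\ell_1(\Gamma)^*$. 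The plan is to use this to show that, after stabilization, the renorm of a signed sum depends only on its finite sign pattern, and that replacing a coefficient $-1$ by a fresh, far-away $+1$ block is an asymptotically harmless (weak$^*$-null) perturbation, forcing the stabilized value of every signed pattern down to the pure value $\lambda$.

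I expect the genuine obstacle to be \emph{exactness}, together with the passage from finite to countable patterns. Stabilization and spreading naturally deliver only $(1+\e)$-estimates, whereas $(\star)$ demands an equality; converting ``$\Gamma$ many near-values'' into a single constant $\lambda$ is precisely what the uncountable cofinality and the $\Delta$-system are there to buy, and getting this exactly right is the delicate point. Moreover, the finite truncations of a signed sum $\sum_{\gamma\in S}\e_\gamma\chi_{A_\gamma}$ converge to it only in the weak$^*$ topology, while an arbitrary equivalent norm need not be weak$^*$ lower semicontinuous; I would circumvent this not by naive limits but by exploiting that each countable $S$ is bounded in $\Gamma$ and is therefore captured, together with its sign pattern, at a bounded stage of the recursion, so that the estimate for countable patterns is built into the construction rather than deduced after the fact.
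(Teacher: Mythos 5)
Your global architecture --- a disjointly supported transfinite block family, the reduction of the isometry to the single statement that all signed sums of blocks have one and the same norm, and the convexity/averaging endgame that upgrades constancy on sign patterns to an isometry for all finitely-valued coefficients --- coincides with the paper's. (One structural caveat: the paper's blocks $V_\alpha$ are $\pm1$-valued vectors from the class $\mathcal{U}$ rather than indicators $\chi_{A_\gamma}$; this is not purely cosmetic, because the admissible perturbation class must be closed under disjoint \emph{signed} combinations for the iteration and for your $(\star)$ to even be formulated consistently.) The genuine gap is the construction of blocks satisfying $(\star)$ with exact equality, which you correctly flag as the delicate point but then propose to handle by a mechanism that does not work. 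The $\Delta$-system step is vacuous here, since your blocks are pairwise disjoint by construction, so their supports already form a $\Delta$-system with empty root. The weak$^*$-null perturbation idea fails for an arbitrary equivalent norm: each block has $\nn\cdot$-norm at least $a>0$, so replacing a $-1$ by a fresh far-away $+1$ block is a perturbation of norm at least $2a$, and weak$^*$-nullity is irrelevant because $\nn\cdot$ has no weak$^*$ semicontinuity whatsoever. Finally, pigeonholing the values $\nn{\chi_{A_\gamma}}$ into small intervals via uncountable cofinality yields only $(1+\e)$-estimates, and nothing in your sketch converts these approximations into the exact equalities that $(\star)$ demands.

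The paper obtains exactness from two ingredients absent from your proposal. First, every $\p\in(\ell_\infty^c(S))^*$ is countably supported (Lemma \ref{lemma1}); hence, by Hahn--Banach, for each $w$ there is a countable forbidden set $A\supseteq\supp w$ with $\|w+u\|\geq\|w\|$ for every $u$ supported off $A$ (Corollary \ref{Cor: forbidden set}). This gives the lower bound with no continuity argument at all. Second, Partington's sup-chasing iteration (Claim \ref{Claim: constant norm}): set $\alpha_n:=\sup\|v_0+\dots+v_n+u\|$ over $u\in\mathcal{U}$ supported off the current forbidden set, choose $v_{n+1}$ with $\|v_0+\dots+v_{n+1}\|\geq\alpha_n-\e_n$, and enlarge the forbidden set. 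For the limit $V=\sum_j v_j$ and any admissible $u$, the forbidden sets give $\|V+u\|\geq\|V\|$, while $\|V+u\|\leq\alpha_n\leq\|v_0+\dots+v_{n+1}\|+\e_n\leq\|V\|+\e_n$ for every $n$, forcing $\|V+u\|=\|V\|$ exactly. A transfinite repetition of this claim, plus uncountable cofinality to make the eventually non-increasing function $\alpha\mapsto\|V_\alpha\|$ eventually constant, is what delivers your $(\star)$. Without these two steps the proof does not close.
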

	
	As a particular case of the result, every renorming of $\ell_{\infty}^{c,F}(S)$ contains an isometric copy of $c_{00}(\omega_1)$, whenever $S$ is an uncountable set. Therefore, we arrive at the following corollary.
	\begin{corollary}\label{Cor: no Gateaux norm} $\ell_{\infty}^{c,F}(S)$ does not admit a G\^ateaux differentiable norm, whenever $S$ is an uncountable set.
	\end{corollary}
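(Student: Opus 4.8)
The plan is to argue by contradiction, the entire weight of the argument resting on Theorem~\ref{Th: c00 in renorming}, or rather on the particular case recorded immediately after it: for an uncountable set $S$, \emph{every} renorming of $\ell_\infty^{c,F}(S)$ contains an isometric copy of $c_{00}(\omega_1)$. So I would begin by assuming, towards a contradiction, that $\nn\cdot$ is an equivalent G\^ateaux differentiable norm on $\ell_\infty^{c,F}(S)$ for some uncountable $S$. Since $\nn\cdot$ is in particular a renorming, applying the particular case of Theorem~\ref{Th: c00 in renorming} to it yields a linear subspace $Z\subseteq\ell_\infty^{c,F}(S)$ and a surjective linear isometry $T\colon(c_{00}(\omega_1),\n_\infty)\to(Z,\nn\cdot)$. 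The two facts I then combine are: \textbf{(a)} G\^ateaux differentiability descends to subspaces and is preserved by linear isometries; and \textbf{(b)} the supremum norm $\n_\infty$ on $c_{00}(\omega_1)$ is \emph{not} G\^ateaux differentiable. Granting these, the restriction of $\nn\cdot$ to $Z$ would be G\^ateaux differentiable, and transporting this through $T$ would make $\n_\infty$ G\^ateaux differentiable on $c_{00}(\omega_1)$, contradicting (b).

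For \textbf{(a)}, the verification is routine and rests on the fact that G\^ateaux differentiability is tested one direction at a time. If $\nn\cdot$ is G\^ateaux differentiable at a nonzero $x\in Z$ with derivative $g\in(\ell_\infty^{c,F}(S))^*$, then for every direction $h\in Z$ the difference quotients $t^{-1}(\nn{x+th}-\nn{x})$ converge to $g(h)$; hence $g\flag_Z$ is the G\^ateaux derivative of $\nn\cdot\flag_Z$ at $x$, so $\nn\cdot\flag_Z$ is G\^ateaux differentiable. Preservation under the surjective isometry $T$ is equally immediate: the identity $\nn{Tx+t\,Th}=\nn{T(x+th)}$ transports difference quotients verbatim, so G\^ateaux differentiability of $\nn\cdot\flag_Z$ at $Tx$ forces G\^ateaux differentiability of $\n_\infty$ at $x$.

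For \textbf{(b)}, I would simply exhibit a single point of non-differentiability. Fix two distinct indices $\gamma_1,\gamma_2<\omega_1$ and set $x:=e_{\gamma_1}+e_{\gamma_2}$, so that $\|x\|_\infty=1$. In the direction $h:=e_{\gamma_1}-e_{\gamma_2}$ one computes $\|x+th\|_\infty=\max(|1+t|,|1-t|)=1+|t|$ for $|t|<1$, so the right and left one-sided derivatives of $t\mapsto\|x+th\|_\infty$ at $t=0$ equal $+1$ and $-1$ respectively. Thus $\n_\infty$ fails to be G\^ateaux differentiable at $x$, which is exactly the ingredient needed to close the contradiction and prove the corollary.

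I expect no genuine obstacle here, precisely because all the difficulty has already been absorbed into Theorem~\ref{Th: c00 in renorming}, whose Partington-type content is that the isometric copy of $c_{00}(\omega_1)$ survives inside an \emph{arbitrary} renorming. The only points requiring a little care are to phrase (a) as \emph{G\^ateaux} (not Fr\'echet) heredity, which is what makes the direction-by-direction argument legitimate, and to record explicitly that the embedding furnished by the theorem is a linear isometry \emph{onto} its range, so that the non-smoothness witnessed by the pair $e_{\gamma_1},e_{\gamma_2}$ is faithfully transported back into $(Z,\nn\cdot)$.
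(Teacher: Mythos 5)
Your proposal is correct and follows exactly the route the paper intends: the corollary is stated as an immediate consequence of the particular case of Theorem~\ref{Th: c00 in renorming} (an isometric copy of $(c_{00}(\omega_1),\n_\infty)$ inside any renorming), combined with the standard facts that G\^ateaux differentiability passes to subspaces and that the sup norm fails to be G\^ateaux differentiable, e.g.\ at $e_{\gamma_1}+e_{\gamma_2}$ in the direction $e_{\gamma_1}-e_{\gamma_2}$. The paper leaves these routine verifications implicit; you have simply written them out, and they are all accurate.
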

	
	In the proof of Theorem \ref{Th: c00 in renorming} we shall need the following standard lemma, a straightforward consequence of a representation theorem for $\ell_\infty^{c}(S)^*$; we also give a simple, self-contained proof, for the sake of completeness.
	\begin{lemma}\label{lemma1} For every $\p\in(\ell_\infty^c(S))^*$, there is a countable set $A\subseteq S$ such that $\langle\p,x\rangle=0$ for every $x\in\ell_\infty^c(S)$ such that $\supp x\cap A=\emptyset$.
	\end{lemma}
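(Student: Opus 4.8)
The plan is to prove the lemma by a maximal–disjointness argument, exploiting the single structural feature of $\ell_\infty^c(S)$ that makes its dual manageable: although $\p$ is in general not determined by its values on the coordinate vectors, it cannot be large on too many vectors with pairwise disjoint supports. Concretely, the key observation is that if $x_1,\dots,x_m\in\ell_\infty^c(S)$ have pairwise disjoint supports and satisfy $\|x_i\|_\infty\le 1$, then, choosing signs $\varepsilon_i:=\operatorname{sign}\langle\p,x_i\rangle$, the vector $\sum_{i=1}^m\varepsilon_ix_i$ again lies in $\ell_\infty^c(S)$ and has norm at most $1$ (at every coordinate at most one summand is non-zero, and the support is a finite union of countable sets). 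Consequently
\[
\sum_{i=1}^m|\langle\p,x_i\rangle|=\Big\langle\p,\sum_{i=1}^m\varepsilon_ix_i\Big\rangle\le\|\p\|,
\]
so for each $\delta>0$ there are at most $\|\p\|/\delta$ many pairwise disjointly supported unit vectors on which $\p$ has absolute value $\ge\delta$.

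First I would fix $k\in\N$ and consider all finite families of pairwise disjointly supported vectors $x$ with $\|x\|_\infty\le 1$ and $|\langle\p,x\rangle|\ge 1/k$. By the previous estimate every such family has at most $k\|\p\|$ elements, so a family $\mathcal F_k$ of maximum cardinality exists and is automatically maximal with respect to inclusion (adding an element would produce a larger family of the same kind). Put $A_k:=\bigcup_{x\in\mathcal F_k}\supp x$, a finite union of countable sets, hence countable, and set $A:=\bigcup_{k\in\N}A_k$, a countable union of countable sets, hence countable.

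It then remains to check that $A$ witnesses the conclusion. Suppose towards a contradiction that some $x\in\ell_\infty^c(S)$ has $\supp x\cap A=\emptyset$ yet $\langle\p,x\rangle\ne0$. After rescaling we may assume $\|x\|_\infty\le1$, and then $|\langle\p,x\rangle|\ge 1/k$ for some $k\in\N$. Since $A_k\subseteq A$, the support of $x$ is disjoint from $A_k$, i.e.\ from the support of every member of $\mathcal F_k$; thus $\mathcal F_k\cup\{x\}$ is a strictly larger family of the same kind, contradicting the maximality of $\mathcal F_k$. Hence no such $x$ exists, and $\langle\p,x\rangle=0$ whenever $\supp x\cap A=\emptyset$.

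I expect the only genuine subtlety to be the first observation rather than the bookkeeping. For spaces such as $c_0(S)$ or $\ell_p(S)$ one could simply collect the coordinates $\gamma$ with $\langle\p,e_\gamma\rangle\ne0$, but functionals on the $\ell_\infty$-type space $\ell_\infty^c(S)$ are not determined by their action on the coordinate vectors; the substitute is precisely the uniform bound on the number of \emph{large} disjoint pieces, which crucially uses that arbitrary disjointly supported bounded families can be amalgamated inside $\ell_\infty^c(S)$ without increasing the norm. Once this is in place, the passage to a single countable $A$ by exhausting over $k$ is routine.
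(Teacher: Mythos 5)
Your proof is correct. The one point that needs care --- that signed sums of pairwise disjointly supported vectors of sup-norm at most $1$ stay in the unit ball of $\ell_\infty^c(S)$, so that $\sum_i|\langle\p,x_i\rangle|\leq\|\p\|$ --- is exactly the mechanism the paper uses as well, but you organize the argument quite differently. The paper argues by contradiction: assuming no countable $A$ works, it runs a transfinite induction of length $\omega_1$ to produce uncountably many disjointly supported unit vectors with $\langle\p,x_\alpha\rangle>0$, passes to an uncountable subfamily on which $\langle\p,x_\alpha\rangle>\delta$, and then contradicts $\|\p\|<\infty$ by summing $N$ of them. You instead make the quantitative bound ($\leq\|\p\|/\delta$ disjoint pieces with $|\langle\p,\cdot\rangle|\geq\delta$) the centerpiece and construct $A$ directly as the union over $k$ of the supports of a maximum-cardinality family at threshold $1/k$; maximality then immediately rules out any offending $x$. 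Your route avoids transfinite induction and the uncountable pigeonhole entirely, and is arguably more elementary and more explicit about \emph{which} countable set works; the paper's version is shorter to state because it does not need the maximal-family bookkeeping. One cosmetic remark: when you conclude that $\mathcal F_k\cup\{x\}$ is strictly larger, it is worth noting that $x\notin\mathcal F_k$ because $x\neq0$ while its support is disjoint from $A_k$ --- a one-line observation, but it is what makes the cardinality genuinely increase.
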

	
	\begin{proof} If $S$ is countable, the result is trivially true, as we can take $A=S$; we then assume that $S$ is uncountable. Arguing by contradiction, assume that, for every countable subset $A$ of $S$, it is possible to find a unit vector $x\in\ell_\infty^c(S)$ with $\langle\p, x\rangle>0$ and $\supp x\cap A=\emptyset$. By an obvious transfinite induction argument, we then obtain a disjointly supported long sequence $(x_\alpha)_{\alpha< \omega_1}$ of unit vectors such that $\langle\p, x_\alpha\rangle>0$, for each $\alpha<\omega_1$. Therefore, up to passing to an uncountable subset and relabeling, we can also assume that $\langle\p, x_\alpha\rangle>\delta$, for some $\delta>0$ and every $\alpha<\omega_1$. This is, however, impossible: indeed, for every $N\in\N$,
		$$1=\left\|\sum_{j=0}^N x_j\right\|_\infty\geq \left\langle\p,\sum_{j=0}^N x_j\right\rangle \geq\delta N,$$
		a contradiction.
	\end{proof}
	
	\begin{corollary}\label{Cor: forbidden set} Let $\n$ be any equivalent norm on $\ell_\infty^c(S)$. Then, for every $w\in\ell_\infty^c(S)$, there exists a countable subset $A$ of $S$ such that $\supp w\subseteq A$ and 
		$$\|w+u\|\geq\|w\|$$
		for every $u\in\ell_\infty^c(S)$ with $\supp u\cap A=\emptyset$.
	\end{corollary}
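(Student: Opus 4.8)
The plan is to read the desired inequality as the assertion that $w+u$ lies on the correct side of a supporting hyperplane at $w$, and to use Lemma \ref{lemma1} to guarantee that this hyperplane ``sees'' only countably many coordinates. First I would invoke the Hahn--Banach theorem to choose a functional $\p$ supporting the $\n$-ball at $w$; that is, $\p$ has dual norm one with respect to $\n$ and $\langle\p,w\rangle=\|w\|$. (If $w=0$ the statement is trivial, so one may assume $w\neq0$.)

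The crucial point is that $\p$ in fact belongs to $(\ell_\infty^c(S))^*$, which is precisely what makes Lemma \ref{lemma1} applicable. This is immediate because $\n$ is, by hypothesis, \emph{equivalent} to $\n_\infty$: consequently a linear functional on $\ell_\infty^c(S)$ is $\n$-continuous if and only if it is $\n_\infty$-continuous, so the two continuous duals coincide as subsets of the algebraic dual. Applying Lemma \ref{lemma1} to $\p$ then produces a countable set $A_0\subseteq S$ with $\langle\p,x\rangle=0$ whenever $\supp x\cap A_0=\emptyset$.

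It then suffices to set $A:=A_0\cup\supp w$. Since $w\in\ell_\infty^c(S)$ has countable support, $A$ is countable and clearly $\supp w\subseteq A$. Finally, for any $u\in\ell_\infty^c(S)$ with $\supp u\cap A=\emptyset$ we have in particular $\supp u\cap A_0=\emptyset$, hence $\langle\p,u\rangle=0$, and therefore
\[
\|w+u\|\geq\langle\p,w+u\rangle=\langle\p,w\rangle+\langle\p,u\rangle=\|w\|,
\]
which is exactly the claim. I expect no serious obstacle in this argument; the only step deserving care is the identification of the two duals in the second paragraph, which is where the \emph{equivalence} of $\n$ with the canonical sup-norm (rather than mere $\n_\infty$-continuity of some fixed functional) is genuinely used, so that Lemma \ref{lemma1} may be invoked for the supporting functional $\p$.
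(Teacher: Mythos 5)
Your argument is correct and is essentially the proof given in the paper: a Hahn--Banach supporting functional at $w$, Lemma \ref{lemma1} applied to that functional to extract the countable set, and then enlarging it by the (countable) support of $w$. The only difference is that you make explicit the (true and worth noting) point that the dual of $(\ell_\infty^c(S),\n)$ coincides with that of $(\ell_\infty^c(S),\n_\infty)$ by equivalence of the norms, a step the paper leaves implicit.
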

	
	\begin{proof} Indeed, let $w\in\ell_\infty^c(S)$. Then, by the Hahn-Banach theorem, there is $\p\in(\ell_\infty^c(S))^*$ with $\|\p\|=1$ and such that $\langle\p,w\rangle=\|w\|$. Consider the countable set $A\subseteq S$ obtained applying Lemma \ref{lemma1} to $\p$. Then, for every $u\in\ell_\infty^c(S)$ such that $\supp u\cap A=\emptyset$,
		$$\|w+u\|\geq\langle\p,w+u\rangle=\langle\p,w\rangle=\|w\|.$$
		Finally, since $\supp w$ is countable, we can also assume $\supp w\subseteq A$, and we are done.
	\end{proof}
	
	\begin{proof}[Proof of Theorem \ref{Th: c00 in renorming}] Let $\n$ be an equivalent norm in $\ell_\infty ^{c,F}(\Gamma)$ and consider the set
		$$\mathcal{U}:=\{x\in\ell_\infty^c(\Gamma)\colon x(\gamma)\in\{0,\pm1\} \ \mbox{for each} \ \gamma\in\Gamma\}.$$
		Evidently, $\mathcal{U}\subseteq\ell_\infty^{c,F}(\Gamma)$; moreover, although $\mathcal{U}$ is not a linear subspace, $u\pm v\in\mathcal{U}$ whenever $u,v\in\mathcal{U}$ are disjointly supported.
		\begin{claim}\label{Claim: constant norm} For every subset $S$ of $\Gamma$ with $|S|=\Gamma$, there exist a vector $V\in\mathcal{U}$ and a countable subset $F$ of $S$ with ${\rm supp}\,V\subseteq F$ such that
			$$\|V+u\|=\|V\|,$$
			for every $u\in\mathcal{U}$ with ${\rm supp}\,u\subseteq S\setminus F$.
		\end{claim}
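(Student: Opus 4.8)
The plan is to build $V$ and $F$ by a greedy transfinite recursion of length $\omega_1$: at each stage one tries to enlarge the norm by adjoining a disjointly supported sign vector, and the point will be that this process must stop, the terminal vector being the desired $V$. The engine driving termination is a uniform norm bound: every $w\in\mathcal{U}$ has $\|w\|_\infty\le1$, so by equivalence of $\n$ with $\n_\infty$ there is a constant $C$ with $\|w\|\le C$ for all $w\in\mathcal{U}$.

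Concretely, I would recursively choose $V_\alpha\in\mathcal{U}$ with $\supp V_\alpha\subseteq S$ and countable sets $F_\alpha\subseteq S$, increasing in $\alpha$, so that at every stage $\supp V_\alpha\subseteq F_\alpha$ and $A_{V_\alpha}\cap S\subseteq F_\alpha$, where $A_{V_\alpha}\subseteq\Gamma$ is the countable set supplied by Corollary \ref{Cor: forbidden set} applied to $V_\alpha$. Start with $V_0:=0$. At a successor stage set
$$s_\alpha:=\sup\{\|V_\alpha+u\|\colon u\in\mathcal{U},\ \supp u\subseteq S\setminus F_\alpha\}.$$
If $s_\alpha=\|V_\alpha\|$, stop. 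Otherwise $s_\alpha>\|V_\alpha\|$, so by definition of supremum I can pick $u_\alpha\in\mathcal{U}$ with $\supp u_\alpha\subseteq S\setminus F_\alpha$ and $\|V_\alpha+u_\alpha\|>\|V_\alpha\|$; I put $V_{\alpha+1}:=V_\alpha+u_\alpha$, which again lies in $\mathcal{U}$ since $u_\alpha$ is disjointly supported from $V_\alpha$, and let $F_{\alpha+1}:=F_\alpha\cup\supp V_{\alpha+1}\cup(A_{V_{\alpha+1}}\cap S)\subseteq S$. Intersecting with $S$ keeps $F_{\alpha+1}\subseteq S$; this costs nothing, because every competitor $u$ is supported in $S$, so the part of $A_{V_{\alpha+1}}$ outside $S$ never meets $\supp u$. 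At a limit stage $\lambda$ the supports $\supp V_\alpha$ only grow and the $V_\alpha$ agree on activated coordinates, so I take the coordinatewise limit $V_\lambda\in\mathcal{U}$ (its support is a countable union of countable sets, hence countable as $\lambda<\omega_1$) and set $F_\lambda:=\bigcup_{\alpha<\lambda}F_\alpha\cup(A_{V_\lambda}\cap S)$.

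The step I expect to be the main obstacle is controlling the norm across limit stages, since $V_\alpha\to V_\lambda$ only coordinatewise and \emph{never} in $\n_\infty$ (the difference $V_\lambda-V_\alpha$ is a nonzero sign vector, of $\n_\infty$-norm $1$), so continuity of the norm is unavailable. Instead I would apply Corollary \ref{Cor: forbidden set} directly: for $\alpha<\lambda$ the vector $V_\lambda-V_\alpha$ lies in $\mathcal{U}$ and is supported in $\bigcup_{\alpha\le\beta<\lambda}\supp u_\beta\subseteq S\setminus F_\alpha$, hence disjoint from $A_{V_\alpha}$ (because $A_{V_\alpha}\cap S\subseteq F_\alpha$ and the support lies in $S$), so the Corollary yields $\|V_\lambda\|=\|V_\alpha+(V_\lambda-V_\alpha)\|\ge\|V_\alpha\|$. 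Thus $(\|V_\alpha\|)_\alpha$ is non-decreasing and strictly increasing at every non-terminal successor. Were the recursion never to stop, the strictly positive successor increments would satisfy $\sum_{\beta<\alpha}(\|V_{\beta+1}\|-\|V_\beta\|)\le\|V_\alpha\|\le C$ for all $\alpha<\omega_1$, forcing a sum of uncountably many positive reals to be finite — impossible. Hence the recursion terminates at some $\alpha^*<\omega_1$.

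At the terminal stage the stopping condition reads $\sup\{\|V_{\alpha^*}+u\|\colon u\in\mathcal{U},\ \supp u\subseteq S\setminus F_{\alpha^*}\}=\|V_{\alpha^*}\|$, which gives $\|V_{\alpha^*}+u\|\le\|V_{\alpha^*}\|$ for all admissible $u$. The reverse inequality is once more Corollary \ref{Cor: forbidden set}: any such $u$ satisfies $\supp u\cap F_{\alpha^*}=\emptyset$ and $A_{V_{\alpha^*}}\cap S\subseteq F_{\alpha^*}$, so $\supp u\cap A_{V_{\alpha^*}}=\emptyset$ and therefore $\|V_{\alpha^*}+u\|\ge\|V_{\alpha^*}\|$. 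Setting $V:=V_{\alpha^*}$ and $F:=F_{\alpha^*}$, one obtains a countable $F\subseteq S$ with $\supp V\subseteq F$ and $\|V+u\|=\|V\|$ for every $u\in\mathcal{U}$ with $\supp u\subseteq S\setminus F$, which is exactly the claim.
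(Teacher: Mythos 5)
Your proof is correct, but it follows a genuinely different route from the paper's. You run a greedy transfinite recursion of length up to $\omega_1$, accepting \emph{any} strict improvement $\|V_\alpha+u_\alpha\|>\|V_\alpha\|$ at each step, and you force termination by a counting argument: the increments are positive, the partial sums are bounded by the uniform bound $C$ on $\n$ over $\mathcal{U}$ (using monotonicity at limit stages, which you correctly secure via Corollary \ref{Cor: forbidden set} applied to $V_\lambda-V_\alpha$), and uncountably many positive reals cannot have bounded partial sums. The paper instead runs a recursion of length $\omega$ only: at step $n$ it computes the supremum $\alpha_n$ and chooses $v_{n+1}$ within $\e_n$ of it, so that for the limit vector $V=\sum v_j$ and any admissible $u$ one gets the squeeze $\|V\|\leq\|V+u\|\leq\alpha_n\leq\|v_0+\dots+v_{n+1}\|+\e_n\leq\|V\|+\e_n$ for all $n$. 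Your version trades the near-optimal selection and the final limiting estimate for the bookkeeping of limit ordinals (coordinatewise limits, countability of supports and of the sets $F_\lambda$, and the monotonicity step), all of which you handle; the paper's version avoids transfinite induction and limit stages altogether at the cost of the $\e_n$-almost-maximal choices. Both arguments lean on Corollary \ref{Cor: forbidden set} for the lower bound $\|V+u\|\geq\|V\|$ and differ only in how the matching upper bound is produced; your care in tracking $A_{V_\alpha}\cap S\subseteq F_\alpha$ while noting that competitors are supported in $S$ is, if anything, slightly more scrupulous than the paper's phrasing.
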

		
		\begin{proof}[Proof of Claim \ref{Claim: constant norm}] Fix a sequence $(\e_j)_{j=0}^\infty$ of positive scalars with $\e_j\searrow0$; also, select $v_0\in\mathcal{U}$. By Corollary \ref{Cor: forbidden set}, there exists a countable set $A_0\subseteq S$ with $\supp v_0\subseteq A_0$ such that $\|v_0+u\|\geq\|v_0\|$ for every $u\in\mathcal{U}$ with $\supp u\subseteq S\setminus A_0$. We then set
			$$\alpha_0:=\sup\left\{\|v_0+u\|\colon u\in\mathcal{U}, \supp u\subseteq S\setminus A_0\right\};$$
			we can now pick a vector $v_1\in\mathcal{U}$ with $\supp v_1\subseteq S\setminus A_0$ and such that $\|v_0+v_1\|\geq\alpha_0-\e_0$. We then repeat the above argument. Corollary \ref{Cor: forbidden set}, applied to the vector $v_0+v_1$, yields us a countable set $A_1\subseteq S$ with $\supp (v_0+v_1)\subseteq A_1$ and such that $\|v_0+v_1+u\|\geq\|v_0+v_1\|$ for every $u\in\mathcal{U}$ with $\supp u\subseteq S\setminus A_1$. Of course, we can assume without loss of generality that $A_0\subseteq A_1$. Next, we define
			$$\alpha_1:=\sup\left\{\|v_0+v_1+u\|\colon u\in\mathcal{U}, \supp u\subseteq S\setminus A_1\right\}.$$
			
			If we continue by induction, we obtain a sequence of vectors $(v_j)_{j=0}^\infty\subseteq\mathcal{U}$ and an increasing sequence of countable subsets $(A_j)_{j=0}^\infty$ of $S$ such that, upon setting
			$$\alpha_n:=\sup\left\{\|v_0+\dots+v_n+u\|\colon u\in\mathcal{U}, \supp u\subseteq S\setminus A_n\right\},$$
			we have:
			\begin{romanenumerate}
				\item $\supp v_n\subseteq A_n$;
				\item $\supp v_{n+1}\cap A_n=\emptyset$;
				\item $\|v_0+\dots+v_n+ v_{n+1}\|\geq \alpha_n-\e_n$;
				\item $\|v_0+\dots+v_n+ u\|\geq \|v_0+\dots+v_n\|$, for every $u\in\mathcal{U}$ with $\supp u\subseteq S\setminus A_n$.
			\end{romanenumerate}
			Indeed, once we have found $v_0,\dots,v_n$ and $A_0,\dots,A_n$ with the above properties, by definition of $\alpha_n$, we can find $v_{n+1}\in\mathcal{U}$ with $\supp v_{n+1}\subseteq S\setminus A_n$ and such that 
			$$\|v_0+\dots+v_n+ v_{n+1}\|\geq \alpha_n-\e_n.$$
			This gives (ii) and (iii). Then, we apply Corollary \ref{Cor: forbidden set} to the vector $v_0+\dots+v_{n+1}$ and we find a countable subset $A_{n+1}$ of $S$ for which (i) and (iv) hold. We can also assume $A_n\subseteq A_{n+1}$, which concludes the induction step.\smallskip
			
			By (i) and (ii), the vectors $v_j$ are disjointly supported; therefore the series $\sum_{j=0}^\infty v_j$ is pointwise convergent and the vector $V:=\sum_{j=0}^\infty v_j$ is a well-defined element of $\mathcal{U}$. Moreover, for every $n\in\N$, the support of $\sum_{j=n+1}^\infty v_j$ is contained in $S\setminus A_n$, whence $\|V\|\geq\|v_0+\dots+v_n\|$, by (iv).
			
			We are now in position to define the forbidden set $F$. By Corollary \ref{Cor: forbidden set}, we can choose a countable set $B$ such that $\supp V\subseteq B$ and $\|V+u\|\geq\|V\|$ for every $u\in\mathcal{U}$ with $\supp u\subseteq S\setminus B$. We then set $F:= \cup_{j=0}^\infty A_j\cup B$.\smallskip
			
			Finally, fix $u\in\mathcal{U}$ with $\supp u\subseteq S\setminus F$. On the one hand, $\|V+u\|\geq\|V\|$, since $\supp u\subseteq S\setminus B$. On the other one, for every $n\in\N$, the vector $u+\sum_{j=n+1}^\infty v_j$ is supported in $S\setminus A_n$, whence the definition of $\alpha_n$ and (iii) imply
			$$\|V+u\|=\left\|v_0+\dots+v_n+ \left(u+\sum_{j=n+1}^\infty v_j\right) \right\|\leq\alpha_n\leq \|v_0+\dots+v_n+v_{n+1}\|+\e_n\leq \|V\|+\e_n.$$
			Letting $n\to\infty$ yields $\|V+u\|=\|V\|$ and concludes the proof of the claim. 
		\end{proof}
		
		Having the claim at our disposal, we can find by transfinite induction vectors $(V_\alpha)_{\alpha<\Gamma}\subseteq\mathcal{U}$ and countable disjoint sets $(F_\alpha)_{\alpha<\Gamma}$ with $\supp V_\alpha\subseteq F_\alpha$ such that
		$$\|V_\alpha+u\|=\|V_\alpha\| \;\; \mbox{for every} \ u\in\mathcal{U} \ \mbox{with} \ \supp u\cap \left(\bigcup_{\beta\leq \alpha}F_\beta \right)=\emptyset.$$
		Indeed, having found $(V_\alpha)_{\alpha<\gamma}$ and $(F_\alpha)_{\alpha<\gamma}$, for some $\gamma<\Gamma$, we apply Claim \ref{Claim: constant norm} to the set $\Gamma\setminus \left(\cup_{\alpha<\gamma}F_\alpha \right)$ and obtain the desired vector $V_\gamma$ and countable set $F_\gamma$. (In the case $\gamma=0$, we just apply the claim to $S=\Gamma$.)\smallskip
		
		If we select any increasing sequence $(\alpha_j)_{j=0}^\infty$ of ordinals, with $\alpha_j<\Gamma$ for each $j$, and signs $(\e_j)_{j=0}^\infty \subseteq\{\pm1\}$, it is clear that the vector $\sum_{j=1}^\infty\e_j V_{\alpha_j}$ belongs to $\mathcal{U}$ and its support is disjoint from $\cup_{\beta\leq\alpha_0}F_\beta$ (once more, the convergence of the above series is intended in the pointwise sense). Therefore,
		\begin{equation}\label{eq: norm equal}
		\left\|\sum_{j=0}^\infty\e_j V_{\alpha_j}\right\|=\| V_{\alpha_0}\|.
		\end{equation}
		
		In particular, $\|V_\alpha\pm V_\beta\|=\|V_\alpha\|$ when $\alpha<\beta<\Gamma$, whence $\|V_\beta\|\leq \frac{1}{2}\|V_\alpha +V_\beta\| +\frac{1}{2}\|V_\alpha- V_\beta\| \\=\|V_\alpha\|$. By the uncountable cofinality of $\Gamma$, the non-increasing function $\alpha\mapsto \|V_\alpha\|$ ($\alpha<\Gamma$) is therefore eventually constant; consequently, up to discarding some initial terms and relabeling, we can assume that there exists $c\in(0,\infty)$ with $\|V_\alpha\|=c$, for each $\alpha<\Gamma$.
		
		From (\ref{eq: norm equal}) we can now infer that, for every increasing sequence $(\alpha_j)_{j=0}^\infty$ of ordinals smaller than $\Gamma$ and signs $(\e_j)_{j=0}^\infty \subseteq\{\pm1\}$, we have
		$$\left\|\sum_{j=0}^\infty\e_j V_{\alpha_j}\right\|=c.$$
		A standard convexity argument then assures us that, for every finitely valued sequence $(c_j)_{j=0}^\infty$ of scalars, one has
		$$\left\|\sum_{j=0}^\infty c_j V_{\alpha_j}\right\|= \max_{j=0,\dots,\infty}|c_j|.$$
		Therefore, the rule
		$$\ell_\infty^{c,F}(\Gamma) \ni(c_\alpha)_{\alpha<\Gamma} \mapsto\sum_{\alpha<\Gamma} c_\alpha V_\alpha$$
		defines an isometric embedding of $\ell_\infty^{c,F}(\Gamma)$ into $(\ell_\infty^{c,F}(\Gamma),\n)$, which concludes the proof.
	\end{proof}
	
\medskip{}
\textbf{Acknowledgements.} The authors wish to express their gratitude to the anonymous referee for carefully reading their manuscript and for the detailed and helpful report.


\end{document}